\theoremstyle{plain}
\newtheorem{theorem}{Theorem}[section]
\newtheorem{proposition}[theorem]{Proposition}
\newtheorem{lemma}[theorem]{Lemma}
\newtheorem{corollary}[theorem]{Corollary}
\theoremstyle{definition}
\theoremstyle{remark}
\newtheorem{example}{Example}[section]
\title{Good Integers: A Concise Completion of the Non-Coprime Case}
\author{Somphong Jitman\footnote{S. Jitman is with the Department of Mathematics, Faculty of Science, Silpakorn University, Nakhon Pathom 73000, THAILAND}}
\date{October 17, 2025}
\begin{document}

\maketitle

\begin{abstract}
   For coprime nonzero integers $a$ and $b$, a positive  integer $\ell$ is said to be {\em good} with respect to $a$ and $b$ if there exists a positive integer $k$ such that  $\ell |(a^{k}+b^{k})$. Since the early 1990s, such  classical good integers have been studied intensively for their   number–theoretic structures and for applications, notably in coding theory. 
This work completes the study  by relaxing the coprimality hypothesis and treating the non-coprime case $\gcd(a,b)\neq1$ in a concise and  self-contained way.  The results are presented  in terms of   the classical coprime criterion   and $p$-adic valuations of $\ell$. As a consequence, whenever  $\ell$ is good, all admissible exponents form a single arithmetic progression with an explicit starting point and period. Some special cases are discussed  in  the non-coprime setting. A practical decision procedure is developed that decides the goodness of a given integer and explicitly enumerates the full set of admissible exponents. Several    illustrative examples   are presented.

    \noindent {\bf Keywords:}{Good integers,   Non-coprime case,   $p$-adic valuations, Admissible exponents}
\end{abstract}

 \section{Introduction}
The notion of \emph{good integers} was formalized by Moree \cite{M1997} in the study of divisors of $a^k+b^k$.  
For fixed coprime nonzero integers $a$ and $b$, a positive  integer $\ell$ is said to be {\em good} with respect to $a$ and $b$ if there exists a positive integer $k$ such that  $\ell |(a^{k}+b^{k})$; otherwise $\ell$ is said to be \emph{bad}.
Denote by  $G_{(a,b)}$  the set of all good integers with  respect to $a$ and $b$. For each $\ell\in G_{(a,b)}$, a positive integer
$k$ with $\ell|(a^{k}+b^{k})$ is called an \emph{admissible exponent} of $\ell$ with respect
to $a$ and $b$, and we denote by
$
  \mathcal{K}_{(a,b)}(\ell)\ :=\ \{ k\in\mathbb{N}:\ \ell |(a^{k}+b^{k}) \}
$
the set of such  admissible exponents.
The odd good integers were analyzed in \cite{M1997}.  Prior to that formal treatment, aspects of $G_{(q,1)}$ for prime powers $q$ appeared in work related to  Bose–Chaudhuri–Hocquenghem (BCH) codes in \cite{KG1969}. 
Subsequent developments connected $G_{(q,1)}$ to average hull dimensions of cyclic codes \cite{S2003}, to the enumeration of Euclidean self-dual cyclic codes \cite{JLX2011}, to the structure and count of self-dual abelian codes via group algebras \cite{JLLX2012}, and to hull computations for cyclic/negacyclic codes of arbitrary lengths \cite{SJLP2015}. 
More recently, attention turned to \ {even} good integers and finer subclasses oddly-good and evenly-good integers in \cite{JPR2020, J2018a}. Through these studies, a complete characterization has been established, along with various applications in coding theory.

As noted above, the existing literature addresses good integers only in the coprime setting, i.e., under the classical assumption $\gcd(a,b)=1$.
 This paper  completes the picture by relaxing the coprimality hypothesis and giving a concise and self-contained treatment of the non-coprime case. 
Precisely, for nonzero integers $A$ and $B$, a positive integer $L$ is said to be \emph{good} with respect to $A$ and $B$ if there exists a positive integer $K$ such that $L|(A^{K}+B^{K})$. Denote by $G_{(A,B)}$ the set of all good integers defined with respect to $A$ and $B$. For each $L\in G_{(A,B)}$, a positive integer $K$ with $L|(A^{K}+B^{K})$ is called an \emph{admissible exponent} of $L$ (with respect to $A$ and $B$), and we define
\[
  \mathcal{K}_{(A,B)}(L)\ :=\ \bigl\{K\in\mathbb{N}:\ L|(A^{K}+B^{K})\bigr\}.
\]
The goal of this paper is to give a complete characterization of good integers in the non-coprime setting.  In addition, we determine the structure of admissible exponents, present a practical decision algorithm, and include illustrative computations.

The paper is organized as follows.  In Section~\ref{sec:2}, notations and   the core results in the classical coprime case are recalled together with the analysis of  the structure of admissible exponents. The non-coprime case is developed in Section~\ref{sec:3}, including a full characterization, the structure of admissible exponents, several special cases, an explicit decision algorithm, and illustrative examples.  Finally, summary and discussion are given in Section \ref{sec:4}.

\section{Preliminaries} \label{sec:2}
In this section, we set the notation used throughout the paper and recall necessary known results for the classical coprime setting together with the investigation of the  admissible exponents.  We begin with basic divisibility conventions, $p$-adic valuations, and the decomposition of an integer relative to a prescribed set of primes. We then summarize the core criteria for good integers. Finally,  a behavior  analysis  of  the admissible exponents  is established.

\subsection{Notations}
For integers $m$ and $n\neq 0$, we say that \emph{$m$ is divisible by $n$} if there exists an integer $t$ such that $m=nt$; in this case we write $n| m$, otherwise $n\nmid m$. 
For a prime $p$ and an integer $n\neq 0$, the \emph{$p$-adic valuation} $\nu_p(n)$ is the largest integer $e $ such that $p^e| n$ and $p^{e+1}\nmid n$.  Equivalently, $e$ is the exponent of $p$ in the prime factorization of $n$. 
For coprime integers $a$ and $n\geq 1$,   let ${\rm ord}_n(a)$ be the multiplicative order of $a$ modulo $n$. In addition, if $b$ is an integer such that  $\gcd(b,n)=1$, the element $b$ has the inverse $b^{-1}$ modulo $n 
$ and $
   {\rm ord}_n\big(ab^{-1}\big) 
$ refers to  the multiplicative order of  the element $ab^{-1}$ modulo $n$.

For a finite set  $S$  of primes and positive integer $n$, let 
\[
  \lambda_S(n) := \prod_{p\in S} p^{\nu_p(n)} ~ \text{ and } ~
  \lambda_{S'}(n)  := \prod_{p\notin S} p^{\nu_p(n)}.
\]
Then we have the following decomposition of $n$: 
\begin{align} \label{eq:decom}
    n = \lambda_S(n) \lambda_{S'}(n) 
\end{align}  with $\gcd(\lambda_S(n) , \lambda_{S'}(n))=1$. The $\lambda_S(n)$ part is regarded as $1$ if $S=\emptyset$.   Consequently,
$\lambda_{S'}(n)=n$ and the factorization \eqref{eq:decom} becomes
$n=\lambda_S(n)\lambda_{S'}(n)=1\cdot n$.

This factorization is key for analyzing good integers in the non-coprime setting in Lemma~\ref{lem:equiv-three-gamma}  and  Theorem~\ref{thm:equiv-G-gamma}. Illustrative computations are presented in the following examples.
 \begin{example}
Let $n=360=2^{3}\cdot 3^{2}\cdot 5$ and $S=\{2,5\}$.
Then
\[
\lambda_S(n)=2^{\nu_2(n)}\cdot 5^{\nu_5(n)}=2^{3}\cdot 5=40\] and 
\[
\lambda_{S'}(n)=\prod_{p\notin S}p^{\nu_p(n)}=3^{\nu_3(n)}=3^{2}=9.
\]
Hence, $n=\lambda_S(n)\lambda_{S'}(n)=40\cdot 9$ and 
$\gcd(40,9)=1$.
\end{example}

\begin{example}
Let $n=24,016=2^{4}\cdot 3\cdot 7^{2}\cdot 11$ and $S=\{3,7,13\}$.
Here, $13\nmid n$ which implies that  $\nu_{13}(n)=0$  and  $13^{0}=1$.
Then
\[
\lambda_S(n)=3^{\nu_3(n)}\cdot 7^{\nu_7(n)}\cdot 13^{\nu_{13}(n)}
=3^{1}\cdot 7^{2}\cdot 13^{0}=147
\]
and 
\[
\lambda_{S'}(n)=\prod_{p\notin S}p^{\nu_p(n)}=2^{\nu_2(n)}\cdot 11^{\nu_{11}(n)}
=2^{4}\cdot 11=176.
\]
Hence,  $n=\lambda_S(n)\lambda_{S'}(n)=147\cdot 176$ and $\gcd(147,176)=1$.
\end{example}

\subsection{Classical Coprime Case}
In this subsection,    the core theory for good integers in the classical setting
$\gcd(a,b)=1$ is summarized. In this case, a positive integer $\ell$ is said to be \emph{good} with respect to $a$ and $b$ if there exists  a positive integer $k$ such that $\ell |(a^{k} + b^{k})$.
 These results form the baseline to which the non-coprime case will be
deduced in the next section.

We begin with  necessary conditions that any positive integer must satisfy to  be good in the classical coprime setting; these will be used repeatedly.

\begin{lemma}[{\cite[Lemma 2.1]{J2018a}}]
	\label{gcd-1-ab} Let $a$ and $ b$  be nonzero coprime  integers and let $d$ be a positive    integer. If    $d\in G_{(a,b)}$, then     $\gcd(a,d)=1=\gcd(b,d)$.  
\end{lemma}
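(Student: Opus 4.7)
The plan is to argue by contradiction using a prime divisor. Suppose $d \in G_{(a,b)}$, so there exists $k \in \mathbb{N}$ with $d \mid (a^{k}+b^{k})$. I want to rule out any common prime factor of $a$ and $d$ (and symmetrically of $b$ and $d$).

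First I would pick a prime $p$ dividing $\gcd(a,d)$ and derive a contradiction. Since $p \mid a$, we have $p \mid a^{k}$, and since $p \mid d$ and $d \mid (a^{k}+b^{k})$, transitivity gives $p \mid (a^{k}+b^{k})$. Subtracting, $p \mid b^{k}$, and since $p$ is prime this forces $p \mid b$. But then $p \mid \gcd(a,b) = 1$, which is impossible. Hence no such prime exists and $\gcd(a,d)=1$. The identical argument, with the roles of $a$ and $b$ swapped, yields $\gcd(b,d)=1$.

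There is essentially no obstacle here: the only ingredients are primality (to pass from $p \mid b^{k}$ to $p \mid b$) and the coprimality hypothesis $\gcd(a,b)=1$. One small thing to note is that the statement is for nonzero $a,b$, which guarantees that $a^{k}$ and $b^{k}$ are well-defined nonzero integers so the divisibility manipulations above are valid; the hypothesis $d \geq 1$ ensures $d$ has prime factors if $d>1$ (and the case $d=1$ is trivial since $\gcd(a,1)=1=\gcd(b,1)$).
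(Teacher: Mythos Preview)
Your argument is correct: the prime-divisor contradiction is exactly the standard proof of this fact. The paper itself does not prove Lemma~\ref{gcd-1-ab} but merely cites it from \cite{J2018a}, so there is no alternative approach to compare against; your proof is the expected elementary one.
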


\subsubsection{Good Odd Integers}
The following results on good odd integers were established  in  \cite{M1997}, providing a foundational characterization and motivating much subsequent work on the classical theory of good integers as well as applications in various fields. 
\begin{proposition}[{\cite[Proposition 2]{M1997}}]\label{2order}
Let $p$ be an odd prime and $r\ge 1$. If $p^r \in G_{(a,b)}$, then ${\rm ord}_{p^r}(ab^{-1})=2s$, where $s$ is the least positive integer such that $(a b^{-1})^s\equiv -1 \pmod{p^{r}}$.
\end{proposition}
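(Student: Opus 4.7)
The plan is to reduce the divisibility $p^r \mid a^k+b^k$ to a condition on powers of the single element $c := ab^{-1} \pmod{p^r}$, and then run a standard multiplicative-order argument against the minimality of $s$.

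First I would invoke Lemma~\ref{gcd-1-ab}: since $p^r \in G_{(a,b)}$, we have $\gcd(a,p^r)=\gcd(b,p^r)=1$, so $b$ is invertible modulo $p^r$ and the element $c=ab^{-1}$ lives in $(\mathbb{Z}/p^r\mathbb{Z})^\times$. The divisibility $p^r\mid a^k+b^k$ is then equivalent to $c^k\equiv -1 \pmod{p^r}$. Because $p^r\in G_{(a,b)}$, the set of such $k$ is nonempty, so the minimal $s$ in the statement exists. Note also that because $p$ is odd, $-1\not\equiv 1\pmod{p^r}$; this tiny observation drives the whole argument.

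Next, let $d={\rm ord}_{p^r}(c)$. From $c^s\equiv -1$ I immediately get $c^{2s}\equiv 1$, hence $d\mid 2s$. On the other hand, $c^s\equiv -1\not\equiv 1\pmod{p^r}$ forces $d\nmid s$. Writing $2s=qd$, the goal is to force $q=1$. If $q$ were even, then $d\mid s$, contradicting $d\nmid s$; so $q$ is odd. If moreover $q\ge 3$, then since $q$ is odd and $q\mid 2s$, we would get $q\mid s$, so $s/q$ is a positive integer with $s/q<s$, and $d$ is even with $d/2=s/q$. From $c^d\equiv 1$ one has $c^{d/2}\equiv\pm 1$; the $+1$ case contradicts the definition of $d$, while the $-1$ case yields $c^{s/q}\equiv -1\pmod{p^r}$ with $1\le s/q<s$, contradicting the minimality of $s$. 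The only surviving possibility is $q=1$, i.e.\ $d=2s$.

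No step is truly difficult; the closest thing to an obstacle is making the case analysis on the parity of $q=2s/d$ transparent and being careful that $s/q$ is a \emph{positive integer} strictly less than $s$ so that minimality is genuinely violated. The use of $p$ odd enters only once, to guarantee $-1\not\equiv 1\pmod{p^r}$, which is what prevents the coincidence $d=s$ and ultimately forces the factor of $2$.
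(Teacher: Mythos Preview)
Your argument is correct. The one implicit step worth flagging is the claim ``from $c^{d}\equiv 1$ one has $c^{d/2}\equiv\pm 1$'': this uses that the only square roots of $1$ modulo $p^{r}$ are $\pm 1$, which holds because $(\mathbb{Z}/p^{r}\mathbb{Z})^{\times}$ is cyclic for odd $p$. You rely on this fact tacitly, so a one-line justification would make the write-up airtight.

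As for comparison with the paper: there is nothing to compare against. The paper does not prove Proposition~\ref{2order}; it merely quotes it from \cite{M1997} in the preliminaries. Your proof is a perfectly standard order argument and is in the spirit of what Moree does. If you want a slightly more streamlined version that avoids the case split on the size of $q$: once you know $d\mid 2s$ and $d\nmid s$, conclude $d$ is even, say $d=2m$; then $c^{m}$ has order $2$ in the cyclic group $(\mathbb{Z}/p^{r}\mathbb{Z})^{\times}$, hence $c^{m}\equiv -1$, so $s\le m$ by minimality, while $2m\mid 2s$ gives $m\le s$, whence $m=s$ and $d=2s$. This is logically equivalent to what you wrote but packages the minimality step more directly.
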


\begin{proposition}[{\cite[Proposition 4]{M1997}}]\label{ord}
Let $p$ be an odd prime and $r\ge 1$. Then ${\rm ord}_{p^r}(ab^{-1})={\rm ord}_{p}(ab^{-1}) p^i$ for some $i\ge 0$.
\end{proposition}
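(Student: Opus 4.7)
The plan is to set $c := ab^{-1}\pmod{p^{r}}$, write $d := {\rm ord}_{p}(c)$, and $D_r := {\rm ord}_{p^{r}}(c)$, and then prove that $D_{r}= d\cdot p^{i}$ for some $i\geq 0$ by induction on $r$. The inductive step will amount to showing that passing from modulus $p^{r}$ to $p^{r+1}$ multiplies the order either by $1$ or by $p$.

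First I would dispose of the divisibility $d \mid D_r$. Since $c^{D_r}\equiv 1 \pmod{p^{r}}$, reduction modulo $p$ gives $c^{D_r}\equiv 1 \pmod{p}$, so by the definition of multiplicative order we obtain $d\mid D_r$. The base case $r=1$ is trivial: $D_1 = d = d\cdot p^{0}$. For the inductive step, assume $D_{r}=d\cdot p^{i_r}$. Because $c^{D_{r+1}}\equiv 1\pmod{p^{r}}$ forces $D_{r}\mid D_{r+1}$, and $D_{r+1}\mid \phi(p^{r+1})$, it suffices to prove that $D_{r+1}/D_{r}\in\{1,p\}$, for then $D_{r+1}=d\cdot p^{i_r}$ or $d\cdot p^{i_r+1}$ and we are done.

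To get $D_{r+1}/D_r\in\{1,p\}$, I would write $c^{D_r}=1+p^{r}m$ for some integer $m$ (possible because $c^{D_r}\equiv 1\pmod{p^{r}}$), and expand
\[
(1+p^{r}m)^{p} \;=\; 1 \;+\; p\cdot p^{r}m \;+\; \sum_{k=2}^{p}\binom{p}{k}(p^{r}m)^{k}.
\]
The linear term contributes $p^{r+1}m$. For $2\le k\le p-1$, $p\mid\binom{p}{k}$, so the $k$-th summand is divisible by $p^{rk+1}$; for $k=p$, the summand is divisible by $p^{rp}$. Since $p$ is odd and $r\ge 1$, one checks $rk+1\ge 2r+1\ge r+2$ and $rp\ge 3r\ge r+2$, so every term with $k\ge 2$ vanishes modulo $p^{r+1}$. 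Hence $(c^{D_r})^{p}\equiv 1\pmod{p^{r+1}}$, which shows ${\rm ord}_{p^{r+1}}(c^{D_r})\in\{1,p\}$, and since $D_{r+1}=D_{r}\cdot {\rm ord}_{p^{r+1}}(c^{D_r})$, the claim follows.

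The main obstacle is the binomial-coefficient bookkeeping in the inductive step, specifically verifying that each higher-order term lies in $p^{r+1}\mathbb{Z}$. This is exactly where the hypothesis \emph{$p$ odd} is used: for $p=2$ and $r=1$ the term $\binom{2}{2}(p^{r}m)^{2}=p^{2r}m^{2}=p^{r+1}m^{2}$ no longer vanishes modulo $p^{r+1}$ in general, and the inductive step can fail. Everything else is a routine reduction-and-lift argument in the cyclic group $(\mathbb{Z}/p^{r}\mathbb{Z})^{\ast}$.
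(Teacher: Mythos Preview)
The paper does not supply its own proof of this proposition: it is quoted verbatim from Moree~\cite{M1997} in the preliminaries and left unproved. So there is no argument in the paper to compare against.

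Your inductive lifting argument is correct and is exactly the standard proof one finds in the literature: reduce modulo $p$ to get $d\mid D_r$, and at each step raise $c^{D_r}=1+p^{r}m$ to the $p$-th power to conclude ${\rm ord}_{p^{r+1}}(c^{D_r})\in\{1,p\}$, whence $D_{r+1}/D_r\in\{1,p\}$. The identity $D_{r+1}=D_r\cdot{\rm ord}_{p^{r+1}}(c^{D_r})$ is justified, and your binomial estimates are sufficient (in fact you only need the exponents to be at least $r+1$, not $r+2$, but the stronger bound you prove is harmless).

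One small inaccuracy in your closing remark: for $p=2$ and $r=1$ the term $\binom{2}{2}(p^{r}m)^{2}=p^{r+1}m^{2}$ \emph{does} vanish modulo $p^{r+1}$, so that is not where the argument would break. In fact the statement itself remains true for $p=2$ (since $\mathrm{ord}_{2}(c)=1$ for any odd $c$ and every order modulo $2^{r}$ is a power of $2$), and your binomial computation goes through for $p=2$ as well once you relax the target bound to $r+1$. The ``$p$ odd'' hypothesis here is inherited from Moree's context rather than being essential to this particular order-lifting fact. This does not affect the validity of your proof of the stated proposition.
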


\begin{theorem}[{\cite[Theorem 1]{M1997}}]\label{goodP}
Let $d>1$ be odd. Then $d\in G_{(a,b)}$ if and only if there exists $s\ge 1$ such that
$2^s\Vert {\rm ord}_{p}(ab^{-1})$ for every prime $p|d$.
\end{theorem}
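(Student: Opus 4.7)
The plan is to reduce divisibility of $a^{k}+b^{k}$ by $d$ to divisibility by each prime power factor of $d$ and then translate each resulting condition into a statement about $2$-adic valuations of multiplicative orders. Writing $d=\prod_{p\mid d}p^{r_{p}}$, the Chinese Remainder Theorem gives $d\mid(a^{k}+b^{k})$ if and only if $p^{r_{p}}\mid(a^{k}+b^{k})$ for every prime $p\mid d$. By Lemma \ref{gcd-1-ab}, any good $d$ is coprime to $ab$, so $b$ is invertible modulo each $p^{r_{p}}$ and the condition $p^{r_{p}}\mid(a^{k}+b^{k})$ becomes $(ab^{-1})^{k}\equiv -1 \pmod{p^{r_{p}}}$.

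Next I would analyze this congruence in terms of $m_{p}:={\rm ord}_{p^{r_{p}}}(ab^{-1})$. Since $p$ is odd, $(\mathbb{Z}/p^{r_{p}}\mathbb{Z})^{*}$ is cyclic and contains a unique element of order $2$, namely $-1$. Hence $(ab^{-1})^{k}\equiv -1$ iff $(ab^{-1})^{k}$ has order exactly $2$, iff $m_{p}\mid 2k$ but $m_{p}\nmid k$. Writing $m_{p}=2^{s_{p}}m_{p}'$ with $m_{p}'$ odd, these two conditions are equivalent to $\nu_{2}(k)=s_{p}-1$ together with $m_{p}'\mid k$. Proposition \ref{ord} then gives $m_{p}={\rm ord}_{p}(ab^{-1})\cdot p^{i_{p}}$ for some $i_{p}\ge 0$; since $p$ is odd, this yields $s_{p}=\nu_{2}({\rm ord}_{p}(ab^{-1}))$, so the $2$-adic constraint on $k$ at the prime $p$ depends only on $\nu_{2}({\rm ord}_{p}(ab^{-1}))$.

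Combining the per-prime conditions, a single $k$ works for all $p\mid d$ if and only if $\nu_{2}(k)+1=s_{p}$ is a common value $s\ge 1$ for every $p\mid d$, which is precisely the criterion $2^{s}\Vert{\rm ord}_{p}(ab^{-1})$ for every $p\mid d$. This establishes necessity. For sufficiency, assuming such an $s$, I would take $k=2^{s-1}\cdot\operatorname{lcm}\{m_{p}':p\mid d\}$; then $\nu_{2}(k)=s-1$ and $m_{p}'\mid k$ for every $p$, so each $p^{r_{p}}\mid(a^{k}+b^{k})$ and hence $d\mid(a^{k}+b^{k})$.

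The main obstacle is the careful $2$-adic bookkeeping: one must observe that $(ab^{-1})^{k}\equiv -1$ forces the exact equality $\nu_{2}(m_{p})=\nu_{2}(k)+1$, not merely an inequality, and one must invoke Proposition \ref{ord} to reduce $\nu_{2}({\rm ord}_{p^{r_{p}}}(ab^{-1}))$ to $\nu_{2}({\rm ord}_{p}(ab^{-1}))$ so that the criterion can be stated purely at the base prime level. The CRT reduction and the explicit construction of $k$ for sufficiency are then routine.
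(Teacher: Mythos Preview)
The paper does not give its own proof of this theorem; it is quoted without argument from \cite{M1997}. Your proof is correct and follows the natural line: reduce via CRT to prime-power moduli, use cyclicity of $(\mathbb{Z}/p^{r}\mathbb{Z})^{*}$ for odd $p$ to translate $(ab^{-1})^{k}\equiv -1$ into the exact $2$-adic constraint $\nu_{2}(k)=\nu_{2}(m_{p})-1$ together with the odd-part divisibility $m_{p}'\mid k$, and then invoke Proposition~\ref{ord} to replace $\nu_{2}({\rm ord}_{p^{r_{p}}}(ab^{-1}))$ by $\nu_{2}({\rm ord}_{p}(ab^{-1}))$. This is precisely the route taken in Moree's original paper, where Propositions~\ref{2order} and~\ref{ord} are the preparatory lemmas for exactly this argument, so your proposal matches the source proof that the present paper is citing.
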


\subsubsection{Good Even Integers}
The characterization of good odd integers in \cite{M1997} has been broadened to encompass good even integers in \cite{JPR2020,J2018a}. For completeness,  the essential criteria and structural consequences of  the even case are summarized below.

\begin{proposition}[{\cite[Proposition 2.3]{JPR2020}}] \label{evengood}
Let $a,b$ be coprime odd integers and $\beta\ge 1$. The following are equivalent.
\begin{enumerate}[$(i)$]
\item $2^\beta\in G_{(a,b)}$.
\item $2^\beta|(a+b)$.
\item $a b^{-1} \equiv -1 \pmod{2^\beta}$.
\end{enumerate}
\end{proposition}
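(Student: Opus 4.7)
The plan is to split the three-way equivalence into two pieces: first the purely algebraic link (ii) $\Leftrightarrow$ (iii), and then the arithmetic link (i) $\Leftrightarrow$ (iii). Since $b$ is odd, it is a unit modulo $2^\beta$, so multiplying the congruence $a \equiv -b \pmod{2^\beta}$ by $b^{-1}$ (and conversely) immediately yields (ii) $\Leftrightarrow$ (iii). The easy direction (iii) $\Rightarrow$ (i) follows by taking $k=1$ as an admissible exponent: the hypothesis is already $2^\beta \mid (a+b)$, so $2^\beta \in G_{(a,b)}$.

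The substantive content is (i) $\Rightarrow$ (iii). Suppose $2^\beta \mid (a^k+b^k)$ for some $k\ge 1$. Since $b$ is a unit modulo $2^\beta$, multiplying through by $(b^{-1})^k$ converts this into $(ab^{-1})^k \equiv -1 \pmod{2^\beta}$. Setting $t := ab^{-1}$ in $(\mathbb{Z}/2^\beta\mathbb{Z})^*$, the problem reduces to the following group-theoretic statement: whenever an element $t$ of $(\mathbb{Z}/2^\beta\mathbb{Z})^*$ satisfies $t^k \equiv -1 \pmod{2^\beta}$ for some $k\ge 1$, one must have $t \equiv -1 \pmod{2^\beta}$. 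This is where the special structure of the $2$-adic unit group enters.

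For $\beta \in \{1,2\}$ the claim is immediate by direct enumeration of $(\mathbb{Z}/2^\beta\mathbb{Z})^*$. For $\beta \ge 3$ I would invoke the standard decomposition $(\mathbb{Z}/2^\beta\mathbb{Z})^* \cong \langle -1\rangle \times \langle 5\rangle$, where the second factor is cyclic of order $2^{\beta-2}$. Writing $t = (-1)^\epsilon \cdot 5^j$ uniquely with $\epsilon \in \{0,1\}$ and $0 \le j < 2^{\beta-2}$, the hypothesis $t^k = -1 = (-1)^1 \cdot 5^0$ forces $\epsilon k$ to be odd (so $\epsilon=1$ and $k$ odd) and $jk \equiv 0 \pmod{2^{\beta-2}}$; odd $k$ then yields $j \equiv 0 \pmod{2^{\beta-2}}$, i.e.\ $j=0$. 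Hence $t \equiv -1 \pmod{2^\beta}$, which is (iii). The main obstacle is precisely this $\beta \ge 3$ case: because $(\mathbb{Z}/2^\beta\mathbb{Z})^*$ is not cyclic, one cannot conclude from $t^k=-1$ by a bare order argument, and must use the $\langle -1\rangle \times \langle 5\rangle$ decomposition to pin down which elements can admit $-1$ as a power.
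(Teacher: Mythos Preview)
Your argument is correct. The equivalence (ii) $\Leftrightarrow$ (iii) and the implication (iii) $\Rightarrow$ (i) are immediate, and your treatment of (i) $\Rightarrow$ (iii) via the structure $(\mathbb{Z}/2^\beta\mathbb{Z})^\ast \cong \langle -1\rangle \times \langle 5\rangle$ for $\beta\ge 3$ is the standard and cleanest way to show that $-1$ has no $k$th roots other than $-1$ itself in this group; the small cases $\beta\in\{1,2\}$ are handled correctly by inspection.

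As for comparison with the paper: this proposition is quoted from \cite{JPR2020} as a preliminary result and the present paper gives no proof of it, so there is nothing here to compare your approach against. Your write-up stands on its own as a complete proof of the cited statement.
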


\begin{proposition}[{\cite[Proposition 2.2]{J2018a}}] \label{prop2d}
Let $a$, $b$, and $d>1$ be pairwise coprime odd integers. Then $d\in G_{(a,b)}$ if and only if $2d\in G_{(a,b)}$. In this case, ${\rm ord}_{2d}(ab^{-1})={\rm ord}_{d}(ab^{-1})$ is even.
\end{proposition}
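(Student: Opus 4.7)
The plan is to treat both directions directly from the definition, using only that $a$ and $b$ are odd.

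First I would handle the equivalence. For the forward direction, suppose $d\in G_{(a,b)}$ and pick $k\in\mathcal{K}_{(a,b)}(d)$, so $d\mid (a^{k}+b^{k})$. Because $a$ and $b$ are odd, $a^{k}+b^{k}$ is even for every $k\ge 1$, and since $d$ is odd we have $\gcd(d,2)=1$. Therefore $2d\mid (a^{k}+b^{k})$, which shows $2d\in G_{(a,b)}$ and in fact $\mathcal{K}_{(a,b)}(d)\subseteq \mathcal{K}_{(a,b)}(2d)$. The reverse direction is immediate: if $2d\mid (a^{K}+b^{K})$ then $d\mid (a^{K}+b^{K})$, giving $d\in G_{(a,b)}$ and $\mathcal{K}_{(a,b)}(2d)\subseteq \mathcal{K}_{(a,b)}(d)$. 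Hence the two admissible-exponent sets coincide, which is the key fact underlying the order equality.

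Next I would justify the claim on multiplicative orders. Under the pairwise coprimality hypothesis, $ab^{-1}$ is well-defined modulo both $d$ and $2d$. By the Chinese Remainder Theorem applied to $2d=2\cdot d$ with $\gcd(2,d)=1$,
\[
{\rm ord}_{2d}(ab^{-1})={\rm lcm}\bigl({\rm ord}_{2}(ab^{-1}),\,{\rm ord}_{d}(ab^{-1})\bigr).
\]
Since $a$ and $b$ are odd, $ab^{-1}\equiv 1\pmod{2}$, so ${\rm ord}_{2}(ab^{-1})=1$ and the lcm reduces to ${\rm ord}_{d}(ab^{-1})$.

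Finally I would show this common order is even. Since $d\in G_{(a,b)}$ there exists $s\ge 1$ with $(ab^{-1})^{s}\equiv -1\pmod{d}$. As $d>1$ is odd we have $d\ge 3$, so $-1\not\equiv 1\pmod{d}$. Then $(ab^{-1})^{2s}\equiv 1$ while $(ab^{-1})^{s}\not\equiv 1$, which forces ${\rm ord}_{d}(ab^{-1})\mid 2s$ but ${\rm ord}_{d}(ab^{-1})\nmid s$; this is possible only if ${\rm ord}_{d}(ab^{-1})$ is even. I expect no serious obstacle here: the only subtle point is remembering that the hypothesis $d>1$ is exactly what keeps $-1\not\equiv 1\pmod d$ so that the standard ``there exists $s$ with $x^{s}\equiv -1$ implies ${\rm ord}(x)$ is even'' argument applies.
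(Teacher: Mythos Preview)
Your proof is correct and self-contained. Note, however, that the paper does not actually prove this proposition: it is quoted verbatim from \cite[Proposition~2.2]{J2018a} as background material, with no argument supplied. So there is no in-paper proof to compare against.

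That said, your argument is exactly the standard one and is what one would expect the cited source to contain. The equivalence step is clean (using that $a^{k}+b^{k}$ is always even when $a,b$ are odd, together with $\gcd(2,d)=1$), the CRT reduction of ${\rm ord}_{2d}$ to ${\rm ord}_{d}$ via ${\rm ord}_{2}(ab^{-1})=1$ is correct, and the final ``$(ab^{-1})^{s}\equiv -1$ forces even order'' step is valid precisely because $d\ge 3$ guarantees $-1\not\equiv 1\pmod d$. The observation that $\mathcal{K}_{(a,b)}(d)=\mathcal{K}_{(a,b)}(2d)$ is a nice byproduct, though not needed for the order claim once you invoke CRT.
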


\begin{proposition}[{\cite[Proposition 2.7]{JPR2020}}]  \label{prop2m}
Let $a,b,d>1$ be pairwise coprime odd integers and $\beta\ge 2$. Then $2^\beta d\in G_{(a,b)}$ if and only if $2^\beta|(a+b)$ and $d\in G_{(a,b)}$ with $2\Vert {\rm ord}_{d}(ab^{-1})$. In this case, ${\rm ord}_{2^\beta}(ab^{-1})=2$ and $2\Vert {\rm ord}_{2^\beta d}(ab^{-1})$.
\end{proposition}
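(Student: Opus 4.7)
The plan is to exploit the coprimality $\gcd(2^\beta,d)=1$ to split the divisibility $2^\beta d\mid(a^K+b^K)$ into its two factors, and then reduce to the already-established $2$-power criterion in Proposition~\ref{evengood} together with the odd characterizations available through Proposition~\ref{prop2d} and Theorem~\ref{goodP}. The pivotal observation will be that any admissible exponent $K$ must be \emph{odd}, which is precisely what forces the exact valuation $2\Vert{\rm ord}_d(ab^{-1})$ rather than the weaker $2\mid{\rm ord}_d(ab^{-1})$.

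For the forward direction, assume $K\in\mathcal{K}_{(a,b)}(2^\beta d)$. Coprimality of $2^\beta$ and $d$ separates the divisibility into $2^\beta\mid(a^K+b^K)$ and $d\mid(a^K+b^K)$. The first gives $2^\beta\in G_{(a,b)}$, so Proposition~\ref{evengood} yields $2^\beta\mid(a+b)$ and $ab^{-1}\equiv -1\pmod{2^\beta}$. Reducing $(ab^{-1})^K\equiv -1\pmod{2^\beta}$ and invoking $\beta\ge 2$ (so that $-1\not\equiv 1\pmod{2^\beta}$) forces $K$ to be odd. The second gives $d\in G_{(a,b)}$ together with $(ab^{-1})^K\equiv -1\pmod d$. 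Writing $m:={\rm ord}_d(ab^{-1})=2^t m'$ with $m'$ odd, the consequences $m\mid 2K$ and $m\nmid K$, combined with $K$ odd, force $t=1$ by a short 2-adic argument, yielding $2\Vert m$.

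For the converse, set $m:={\rm ord}_d(ab^{-1})$ with $2\Vert m$ and let $K:=m/2$, which is odd. Squaring shows $((ab^{-1})^K)^2\equiv 1\pmod d$, and minimality of $m$ rules out $(ab^{-1})^K\equiv 1\pmod d$ (since $d>1$ is odd); hence $(ab^{-1})^K\equiv -1\pmod d$, equivalently $d\mid(a^K+b^K)$. Because $K$ is odd, the factorization $(a+b)\mid(a^K+b^K)$ together with the hypothesis $2^\beta\mid(a+b)$ gives $2^\beta\mid(a^K+b^K)$, and coprimality delivers $2^\beta d\mid(a^K+b^K)$. The order identities are then immediate: $ab^{-1}\equiv -1\pmod{2^\beta}$ with $\beta\ge 2$ gives ${\rm ord}_{2^\beta}(ab^{-1})=2$, and the Chinese Remainder Theorem yields ${\rm ord}_{2^\beta d}(ab^{-1})={\rm lcm}(2,m)=m$, which has $2$-adic valuation exactly one.

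The main obstacle I anticipate is the bookkeeping that upgrades $2\mid{\rm ord}_d(ab^{-1})$ to the exact valuation $2\Vert{\rm ord}_d(ab^{-1})$, because only the sharp statement is strong enough both for the converse and for the subsequent applications. The hypothesis $\beta\ge 2$ is essential here: it rigidly forces $K$ to be odd, whereas in the $\beta=1$ regime treated separately by Proposition~\ref{prop2d}, $K$ is unconstrained modulo $2$ and this finer valuation statement indeed fails, which accounts for the genuine dichotomy between the two propositions.
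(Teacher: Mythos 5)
The paper does not prove this proposition---it is quoted from \cite{JPR2020}---so your argument can only be judged on its own terms. Your forward direction is correct: splitting $2^\beta d\mid(a^K+b^K)$ by coprimality, invoking Proposition~\ref{evengood} for the $2$-part, forcing $K$ odd from $(-1)^K\equiv-1\pmod{2^\beta}$ with $\beta\ge2$, and then deducing $2\Vert m$ from $m\mid 2K$, $m\nmid K$, $K$ odd is sound, as are the closing order computations ${\rm ord}_{2^\beta}(ab^{-1})=2$ and ${\rm ord}_{2^\beta d}(ab^{-1})={\rm lcm}(2,m)=m$ via CRT.

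The converse, however, has a genuine gap. From $\bigl((ab^{-1})^{m/2}\bigr)^2\equiv1\pmod d$ and $(ab^{-1})^{m/2}\not\equiv1\pmod d$ you conclude $(ab^{-1})^{m/2}\equiv-1\pmod d$; this inference only works when $\pm1$ are the sole square roots of unity modulo $d$, which fails for composite $d$. Concretely, take $a=7$, $b=13$, $d=15$, $\beta=2$: then $2^\beta\mid a+b=20$ and ${\rm ord}_{15}(ab^{-1})={\rm ord}_{15}(4)=2$, so $2\Vert{\rm ord}_d(ab^{-1})$, yet $(ab^{-1})^{m/2}\equiv4\not\equiv-1\pmod{15}$ and in fact $15\notin G_{(7,13)}$, hence $60\notin G_{(7,13)}$. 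The point is that your converse never uses the hypothesis $d\in G_{(a,b)}$, and without it the statement you actually argue for is false. The repair is standard: $d\in G_{(a,b)}$ means $-1$ lies in the cyclic group $\langle ab^{-1}\rangle\subseteq(\mathbb{Z}/d\mathbb{Z})^{\times}$, and since a cyclic group of even order $m$ has a unique element of order $2$, namely $(ab^{-1})^{m/2}$, one gets $(ab^{-1})^{m/2}\equiv-1\pmod d$ (this is precisely the mechanism of Lemma~\ref{lem:min-k}, or Proposition~\ref{2order} applied prime-power-wise together with Theorem~\ref{goodP}). With that step corrected, the remainder of your converse ($K=m/2$ odd, $(a+b)\mid(a^K+b^K)$, and coprimality of $2^\beta$ and $d$) goes through.
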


\subsection{Behavior of Admissible Exponents}
For fixed coprime integers  $a, b$ and  a positive integer $\ell\in G_{(a,b)}$,   the admissible exponents of $\ell$ are presented.  In particular, for $\ell\notin\{1,2\}$ all admissible exponents have the same parity. Later, these behaviors    will  be discussed  in the non-coprime case    in Subsection \ref{ssec:admis}

\begin{lemma}\label{lem:k}
Let $a$ and $b$ be coprime  integers and let $\ell$ and $k$ be  positive integers. Then   $\ell|(a^k+b^k)$
  if and only if  $\ell|(a^{\alpha k}+b^{\alpha k})$ for all odd positive integers $\alpha$. 
\end{lemma}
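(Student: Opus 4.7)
The plan is to treat the two implications separately and observe that the backward direction is essentially immediate while the forward direction reduces to a single polynomial identity. Note in particular that the coprimality of $a$ and $b$ is not needed in the argument; it is simply inherited from the ambient hypotheses where the lemma will be applied.

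For the backward implication, I would just specialize $\alpha=1$: if $\ell \mid (a^{\alpha k} + b^{\alpha k})$ for every odd positive integer $\alpha$, then choosing $\alpha=1$ yields $\ell \mid (a^k + b^k)$ directly.

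For the forward implication, the key tool is the classical factorization
\[
X^{\alpha} + Y^{\alpha} \;=\; (X+Y)\sum_{i=0}^{\alpha-1} (-1)^{i}\, X^{\alpha-1-i}\, Y^{i},
\]
valid in $\mathbb{Z}[X,Y]$ whenever $\alpha$ is an odd positive integer. I would substitute $X = a^{k}$ and $Y = b^{k}$, which gives
\[
a^{\alpha k} + b^{\alpha k} \;=\; (a^{k}+b^{k}) \cdot Q(a,b,k,\alpha),
\]
with $Q(a,b,k,\alpha) \in \mathbb{Z}$. Consequently $a^{k}+b^{k}$ divides $a^{\alpha k}+b^{\alpha k}$ in $\mathbb{Z}$. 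Since $\ell \mid (a^{k}+b^{k})$ by hypothesis, transitivity of divisibility yields $\ell \mid (a^{\alpha k}+b^{\alpha k})$ for every odd positive integer $\alpha$.

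There is really no obstacle to overcome here: the argument is purely algebraic, and the odd parity of $\alpha$ is used only to ensure the sign pattern that makes $(X+Y)$ a factor of $X^{\alpha}+Y^{\alpha}$. The main care point is simply to state the identity cleanly and to emphasize that the quotient polynomial has integer coefficients so that the divisibility transfers from $\mathbb{Z}[a,b]$ to $\mathbb{Z}$.
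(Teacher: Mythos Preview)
Your proof is correct but proceeds differently from the paper. The paper argues multiplicatively modulo $\ell$: from $\ell\mid(a^{k}+b^{k})$ it invokes Lemma~\ref{gcd-1-ab} (which genuinely uses $\gcd(a,b)=1$) to conclude $\gcd(b,\ell)=1$, inverts $b$ modulo $\ell$ to obtain $(ab^{-1})^{k}\equiv -1\pmod{\ell}$, and then raises to the odd power $\alpha$ so that $(-1)^{\alpha}=-1$. Your route via the factorization $X^{\alpha}+Y^{\alpha}=(X+Y)\sum_{i=0}^{\alpha-1}(-1)^{i}X^{\alpha-1-i}Y^{i}$ for odd $\alpha$ avoids any inversion modulo $\ell$ and, as you observe, makes the coprimality hypothesis superfluous; it is strictly more elementary and more general. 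The paper's approach, on the other hand, foreshadows the role of $ab^{-1}$ and its multiplicative order that pervades the subsequent lemmas (e.g.\ Lemma~\ref{lem:min-k}), so it fits the surrounding narrative even if it is logically heavier here.
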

\begin{proof}  First, assume that  $\ell|(a^k+b^k)$ and let $\alpha $ be an odd positive integer.  Then $\ell\in G_{(a,b)}$. By Lemma \ref{gcd-1-ab}, we have $\gcd(a,\ell)=1 $ and $\gcd(b,\ell)=1$ which implies that  $b$ is invertible modulo $\ell$. Hence, 
$ (a b^{-1})^k \equiv -1 \pmod{\ell}$.
Since $\alpha$ is odd, we have  $(a b^{-1})^{\alpha k}\equiv ((a b^{-1})^k)^{\alpha}\equiv (-1)^{\alpha} \equiv -1 \pmod{\ell}$, so $(a b^{-1})^{\alpha k}\equiv -1\pmod{\ell}$.  Consequently,   $\ell|(a^{\alpha k}+b^{\alpha k})$ for all odd positive integers $\alpha$.

The converse follows immediately by taking  an odd positive integer $\alpha=1$.
\end{proof}

We observe that  $\mathcal K_{(a,b)}(1)=\mathbb{N}$ for all coprime integers $a$ and $b$,  and   $\mathcal K_{(a,b)}(2)=\mathbb{N}$   for all coprime odd integers $a$ and $b$.  For $\ell>2$, we have the following results for the admissible exponents.

\begin{lemma}\label{lem:min-k}
Let $a$ and $b$ be coprime positive integers and let $\ell\in G_{(a,b)}\setminus \{ 1,2\}$.
Then   
$
  {\rm ord}_{\ell} \left(ab^{-1}\right)  $  is even and  
 \[\min ( \mathcal K_{(a,b)}(\ell))=\frac{{\rm ord}_{\ell} \left(ab^{-1}\right) }{2}.\]
Moreover,  
\[
 \mathcal K_{(a,b)}(\ell)=\Bigl\{\alpha\cdot\frac{{\rm ord}_{\ell} \left(ab^{-1}\right) }{2}\ :\ \alpha\ \text{ is an odd positive integer}\Bigr\}\] is infinite.
\end{lemma}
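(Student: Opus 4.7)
The plan is to reduce everything to a statement about the multiplicative order of $c:=ab^{-1}$ modulo $\ell$, exploiting that $\ell\mid(a^{k}+b^{k})$ is equivalent to $c^{k}\equiv -1\pmod{\ell}$. The hypothesis $\ell\in G_{(a,b)}$, together with Lemma~\ref{gcd-1-ab}, guarantees that $\gcd(b,\ell)=1$, so $b^{-1}$ exists modulo $\ell$ and this reformulation is legitimate.

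First, I would fix an arbitrary $k\in\mathcal K_{(a,b)}(\ell)$ (which exists because $\ell$ is good) and set $d:={\rm ord}_{\ell}(c)$. From $c^{k}\equiv -1\pmod{\ell}$, squaring yields $c^{2k}\equiv 1\pmod{\ell}$, so $d\mid 2k$. The assumption $\ell>2$ forces $-1\not\equiv 1\pmod{\ell}$, hence $c^{k}\not\equiv 1\pmod{\ell}$ and $d\nmid k$. Combined, these two divisibility facts force $d$ to be even; writing $d=2e$, I would then note that $2e\mid 2k$ gives $e\mid k$, while $d\nmid k$ gives that $k/e$ is odd. Since $c^{e}$ squares to $1$, it is $\pm 1$; but if $c^{e}\equiv 1$, then $c^{k}=(c^{e})^{k/e}\equiv 1$, contradicting $c^{k}\equiv -1$. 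Hence $c^{e}\equiv -1\pmod{\ell}$, which shows that $e=d/2$ itself belongs to $\mathcal K_{(a,b)}(\ell)$.

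This already pins down the minimum: the previous paragraph shows that every admissible exponent $k$ satisfies $k=\alpha e$ for an odd positive integer $\alpha$, so $k\geq e$, and the value $e$ is attained. Conversely, for any odd positive integer $\alpha$, one has $c^{\alpha e}=(c^{e})^{\alpha}=(-1)^{\alpha}=-1\pmod{\ell}$, so $\alpha e\in\mathcal K_{(a,b)}(\ell)$; this direction is exactly Lemma~\ref{lem:k} applied to the exponent $e$. Combining the two inclusions gives the explicit description
\[
 \mathcal K_{(a,b)}(\ell)=\Bigl\{\alpha\cdot\tfrac{{\rm ord}_{\ell}(ab^{-1})}{2}\ :\ \alpha\text{ odd positive integer}\Bigr\},
\]
and infiniteness is immediate since there are infinitely many odd $\alpha$.

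The only subtle point is the step that upgrades ``$c^{2k}\equiv 1$ and $c^{k}\not\equiv 1$'' to ``$c^{e}\equiv -1$'' rather than merely ``$c^{e}\equiv\pm 1$''; I would handle this by using the odd quotient $k/e$ to propagate the sign, as sketched above. Everything else is routine order arithmetic, and the exclusion $\ell\notin\{1,2\}$ is used exactly once, to ensure $-1\not\equiv 1\pmod{\ell}$ so that the parity argument on $d$ goes through.
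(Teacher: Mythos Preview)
Your overall strategy is sound and the argument works, but one step is not correct as written. You assert that ``since $c^{e}$ squares to $1$, it is $\pm 1$''; this is fine modulo a prime, but for composite $\ell$ the group $(\mathbb{Z}/\ell\mathbb{Z})^{\ast}$ need not be cyclic and can have more than two square roots of $1$ (for instance $3^{2}\equiv 1\pmod 8$ with $3\not\equiv\pm1$). The fix is immediate from ingredients you already have: since $(c^{e})^{2}=1$ and $k/e$ is odd, $(c^{e})^{k/e}=c^{e}$, and the left side equals $c^{k}\equiv -1$, so $c^{e}\equiv -1$ directly. Equivalently, $-1=c^{k}$ lies in the cyclic group $\langle c\rangle$ and has order~$2$ (here you use $\ell>2$), hence must coincide with the unique order-$2$ element $c^{d/2}$. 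Ironically, the ``subtle point'' you flag at the end is not the passage from $\pm1$ to $-1$ but the unjustified claim that $c^{e}\in\{\pm1\}$ in the first place.

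With that repair, your proof is complete and in fact more self-contained than the paper's. The paper obtains evenness of ${\rm ord}_{\ell}(ab^{-1})$ and the congruence $(ab^{-1})^{{\rm ord}_{\ell}(ab^{-1})/2}\equiv -1$ by invoking the classification results Theorem~\ref{goodP} and Propositions~\ref{evengood}--\ref{prop2m}, then argues via uniqueness of the order-$2$ element in $\langle ab^{-1}\rangle$ for the inclusion $\mathcal K_{(a,b)}(\ell)\subseteq T$ and Lemma~\ref{lem:k} for the reverse inclusion. Your route bypasses the classification entirely: both the parity of $d$ and the identity $c^{d/2}\equiv -1$ drop out of elementary order arithmetic from the single hypothesis $c^{k}\equiv -1$. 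This makes Lemma~\ref{lem:min-k} logically independent of those structural theorems, which is a cleaner dependency structure; the paper's version, on the other hand, highlights how the lemma fits into the known taxonomy of good integers.
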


\begin{proof} Let $T=\Bigl\{\alpha\cdot\frac{{\rm ord}_{\ell} \left(ab^{-1}\right) }{2}\ :\ \alpha\ \text{ is an odd positive integer}\Bigr\}$. 
From  Theorem~\ref{goodP}  and Propositions~\ref{evengood}--\ref{prop2m},  it follows that  $
  {\rm ord}_{\ell} \left(ab^{-1}\right)  $  is even and    \[(a b^{-1})^{{\rm ord}_{\ell}(ab^{-1})/2}\equiv -1\pmod{\ell}.\]
Hence, 
\[
  \ell  \bigm| \left(a^{{\rm ord}_{\ell}(ab^{-1})/2}+b^{{\rm ord}_{\ell}(ab^{-1})/2}\right).
\]
We note that   $-1\in\langle   ab^{-1}\rangle$ as  a cyclic multiplicative group  contained a unique element of order $2$,
namely $\left(ab^{-1}\right)^{\frac{{\rm ord}_{\ell} \left(ab^{-1}\right) }{2}} \equiv -1 \pmod \ell$.  Let $k\in \mathcal K_{(a,b)}(\ell)$. Then   $(ab^{-1})^{k}\equiv -1 \pmod \ell$ which implies that  ${\frac{{\rm ord}_{\ell} \left(ab^{-1}\right) }{2}}|k$.  Hence, \[\min ( \mathcal K_{(a,b)}(\ell))=\frac{{\rm ord}_{\ell} \left(ab^{-1}\right) }{2}\] and $\mathcal K_{(a,b)}(\ell)\subseteq T$.   By Lemma \ref{lem:k},  it follows that $T\subseteq \mathcal K_{(a,b)}(\ell)$.  Hence, $ \mathcal K_{(a,b)}(\ell) =T$  which is clearly infinite.
\end{proof}

\begin{lemma}\label{lem:K-structure}
Let $a$ and $b$ be coprime integers and let $\ell\in G_{(a,b)}$.  If $\ell\notin\{1,2\}$, then all integers  $k\in\mathcal K_{(a,b)}(\ell)$ have the same parity. 
\end{lemma}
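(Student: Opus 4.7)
The plan is to reduce this directly to the structural description of $\mathcal K_{(a,b)}(\ell)$ already obtained in Lemma~\ref{lem:min-k}. Since $\ell\notin\{1,2\}$ and $\ell\in G_{(a,b)}$, that lemma gives an explicit parametrization
\[
 \mathcal K_{(a,b)}(\ell)=\Bigl\{\alpha\cdot m\ :\ \alpha\ \text{an odd positive integer}\Bigr\},\qquad m:=\frac{{\rm ord}_{\ell}(ab^{-1})}{2},
\]
so every admissible exponent has the form $\alpha m$ with $\alpha$ odd.

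The key observation is then just that multiplication by an odd integer preserves parity modulo $2$: since $\alpha\equiv 1\pmod 2$, one has $\alpha m\equiv m\pmod 2$ for every odd $\alpha$. Consequently, every element of $\mathcal K_{(a,b)}(\ell)$ is congruent to $m$ modulo $2$, and so all such exponents share the common parity of $m$. In particular, $\mathcal K_{(a,b)}(\ell)$ consists entirely of even integers when $m$ is even, and entirely of odd integers when $m$ is odd.

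The only mild subtlety is that Lemma~\ref{lem:min-k} is stated for coprime positive integers $a,b$, while the present lemma is stated for coprime integers. However, the proof of Lemma~\ref{lem:min-k} relies only on the invertibility of $b$ modulo $\ell$ (supplied by Lemma~\ref{gcd-1-ab}) and on the congruence $(ab^{-1})^k\equiv -1\pmod\ell$, both of which make sense for arbitrary coprime integers; hence the parametrization transfers verbatim. There is no genuine obstacle here — the result is an immediate corollary of Lemma~\ref{lem:min-k} together with the trivial fact that odd multiples of $m$ all have the same parity as $m$.
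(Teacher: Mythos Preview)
Your proof is correct and follows essentially the same route as the paper: both invoke Lemma~\ref{lem:min-k} to write every admissible exponent as an odd multiple of $m={\rm ord}_\ell(ab^{-1})/2$ and then observe that such multiples all share the parity of $m$. Your version is arguably slightly more careful, since you flag and resolve the discrepancy between ``coprime positive integers'' in Lemma~\ref{lem:min-k} and ``coprime integers'' here, a point the paper passes over silently.
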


\begin{proof} Assume that  $\ell\notin\{1,2\}$.
By Lemma~\ref{lem:min-k}, we have the explicit
 form 
\[
 \mathcal K_{(a,b)}(\ell)=\Bigl\{\alpha\cdot\frac{{\rm ord}_{\ell} \left(ab^{-1}\right) }{2}\ :\ \alpha\ \text{ is an odd positive integer}\Bigr\}.
\]     Let $ k,m \in \mathcal{K}_{(a,b)}(\ell)$.  Then $k=\alpha\cdot\frac{{\rm ord}_{\ell} \left(ab^{-1}\right) }{2}$ and $m=\beta\cdot\frac{{\rm ord}_{\ell} \left(ab^{-1}\right) }{2}$ for some odd positive integers $\alpha$ and $\beta$.  Since $\alpha-\beta$ is even,  it follows that $k-m\equiv  (\alpha-\beta)\cdot\frac{{\rm ord}_{\ell} \left(ab^{-1}\right) }{2} \equiv 0 \pmod 2$  which implies that $k$ and  $m$ have the same parity.
\end{proof}
From Lemmas \ref{lem:min-k} and  \ref{lem:K-structure}, for each $\ell\in G_{(a,b)}\setminus \{ 1,2\}$, the  admissible exponents form  an arithmetic progression with the first term  $\frac{{\rm ord}_{\ell} \left(ab^{-1}\right) }{2}$ and common different  ${{\rm ord}_{\ell} \left(ab^{-1}\right) }$.  Moreover, the parity in Lemma  \ref{lem:K-structure} supports \cite[Proposition 3.1]{J2018a}.

\section{Non-coprime Case} \label{sec:3}
 
In this section,  results for general situation where $A$ and $B$ are not necessarily coprime, is established,  including a full characterization, the structure of admissible exponents, several special cases, an explicit decision algorithm, and illustrative examples.

\subsection{Characterization}

Let $A$ and $B$ be non-zero integers  and set $g=\gcd(A,B)$. Write
\[
A=g a \quad  \text{ and } \quad B=g b,
\]
for some integers $a$ and $b$ such that  $\gcd(a,b)=1$. 
Let $\mathcal{P}(g)$ be the set of prime divisors of $g$. Clearly,  $L=1\in G_{(A,B)}$.    For each   integer $L\geq 2$, based on  the
  decomposition in \eqref{eq:decom} with $S=\mathcal{P}(g)$, it follows that
\[
  L  =  \lambda_{\mathcal{P}(g)}(L) \ell,
\]
where  the $g$-part
 $ \lambda_{\mathcal{P}(g)}(L)=\prod\limits_{p|g}p^{\nu_p(L)} $ and  the $\ell$-part $
  \ell=\prod\limits_{p\nmid g}p^{\nu_p(L)}$.  We note  that   $\gcd(\ell,g)=1$.  Let
\[
  \gamma(L)\ :=\ \max_{p|g}\Big\lceil \frac{\nu_p(L)}{\nu_p(g)}\Big\rceil,
\]
where  $\lceil  x \rceil$ denotes the smallest integer greater than or equal to $x$.  By convention,   the maximum over the empty set is $0$   if   $g=1$.

\begin{example} \label{ex3.1}
Let $A=18$ and  $B=12$. Then $g=\gcd(18,12)=6$ with $\mathcal P(g)=\{2,3\}$ and
$A=ga$, $B=gb$ where $a=3$ and $b=2$.
Let $L=1,200=2^4\cdot 3^1\cdot 5^2$. Then
\[\text{the } g-\text{part }
  \lambda_{\mathcal P(g)}(L)=\prod_{p| g}p^{\nu_p(L)}=2^{4}\cdot 3^{1}=48  ~~\text{and  the }  \ell-\text{part }
  \ell=\prod_{p\nmid g}p^{\nu_p(L)}=5^{2}=25,\]  and $ L=\lambda_{\mathcal P(g)}(L)\ell=48\cdot 25$ 
  with  $\gcd(\ell,g)=\gcd(25,6)=1$.
Since $\nu_2(g)=\nu_3(g)=1$, we have
\[
  \gamma(L)=\max_{p| g}\Big\lceil \tfrac{\nu_p(L)}{\nu_p(g)}\Big\rceil
  =\max\big\{\lceil 4/1\rceil,\ \lceil 1/1\rceil\big\}=4.
\]
\end{example}

\begin{example} 
Let $A=8$  and  $B=27$. Then $g=\gcd(8,27)=1 $ and  $\mathcal P(g)=\varnothing$ which implies that 
$A=ga$ and  $B=gb$ with $a=8$, $b=27$,  and $\gcd(a,b)=1$. Then, for any   $L$, the decomposition gives $L=\lambda_{\mathcal P(g)}(L)\ell=1\cdot L$,  where 
\[\text{the }g\text{-part } 
  \lambda_{\mathcal P(g)}(L)=\prod_{p| g}p^{\nu_p(L)}=1 \text{  and the }\ell\text{-part }
  \ell=\prod_{p\nmid g}p^{\nu_p(L)}=L
\]
with  $\gcd(\ell,g)=1$ trivially.
By convention, $
  \gamma(L)=\max_{p| g}\Big\lceil \tfrac{\nu_p(L)}{\nu_p(g)}\Big\rceil=0$.
\end{example}

\begin{lemma}\label{lem:equiv-three-gamma}  Let $A$ and $B$ be non-coprime integers and let $L$ be a positive integer. With notation as above, the following statements  are equivalent:
\begin{enumerate}
\item[(i)] There exists $K\in \mathbb{N}$ such that $L |(A^K+B^K)$.
\item[(ii)] There exists $\kappa\ge \gamma(L)$ such that $\ell |(a^\kappa+b^\kappa)$.
\item[(iii)] There exists $k\in \mathbb{N}$ such that $\ell |(a^k+b^k)$.
\end{enumerate}
\end{lemma}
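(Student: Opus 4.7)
The plan is to prove the cycle $(i)\Rightarrow(iii)\Rightarrow(ii)\Rightarrow(i)$, leveraging the factorization $A^K+B^K=g^K(a^K+b^K)$ together with the coprime decomposition $L=\lambda_{\mathcal{P}(g)}(L)\,\ell$ with $\gcd(\ell,g)=1$, and the odd-multiplier flexibility provided by Lemma~\ref{lem:k} (which applies since $\gcd(a,b)=1$).

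For $(i)\Rightarrow(iii)$, I would start from $L\mid g^K(a^K+b^K)$. Since $\ell\mid L$ and $\gcd(\ell,g)=1$ gives $\gcd(\ell,g^K)=1$, the $\ell$-factor must divide $(a^K+b^K)$; setting $k=K$ furnishes (iii). For $(iii)\Rightarrow(ii)$, by Lemma~\ref{lem:k} we have $\ell\mid(a^{\alpha k}+b^{\alpha k})$ for every odd positive integer $\alpha$. I would then pick any odd $\alpha$ large enough that $\alpha k\geq\gamma(L)$ (e.g., the smallest odd integer exceeding $\gamma(L)/k$) and set $\kappa=\alpha k$; this is immediate since $\gamma(L)$ is a fixed nonnegative integer.

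For $(ii)\Rightarrow(i)$, I would take $K=\kappa$ and verify that the two coprime factors of $L$ each divide $g^\kappa(a^\kappa+b^\kappa)$. The $\ell$-part holds by hypothesis ($\ell\mid(a^\kappa+b^\kappa)$, hence $\ell\mid g^\kappa(a^\kappa+b^\kappa)$). For the $g$-part, I would check each prime $p\mid g$ individually: since $\kappa\geq\gamma(L)\geq\lceil\nu_p(L)/\nu_p(g)\rceil$, one obtains
\[
\nu_p\bigl(g^\kappa(a^\kappa+b^\kappa)\bigr)\geq \kappa\,\nu_p(g)\geq \Bigl\lceil \tfrac{\nu_p(L)}{\nu_p(g)} \Bigr\rceil \nu_p(g)\geq \nu_p(L),
\]
so $\lambda_{\mathcal{P}(g)}(L)\mid g^\kappa(a^\kappa+b^\kappa)$. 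Coprimality of the two parts then gives $L\mid g^\kappa(a^\kappa+b^\kappa)=A^\kappa+B^\kappa$.

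The main obstacle I anticipate is in the step $(iii)\Rightarrow(ii)$, namely promoting an arbitrary admissible exponent for the coprime pair $(a,b)$ into one that simultaneously clears the threshold $\gamma(L)$ demanded by the $g$-part; this is the precise place where Lemma~\ref{lem:k} earns its keep, since it is crucial that the odd dilation $\alpha k$ preserves divisibility $\ell\mid(a^{\alpha k}+b^{\alpha k})$. Everything else reduces to bookkeeping with $p$-adic valuations and the definition of $\gamma(L)$ through the ceiling function.
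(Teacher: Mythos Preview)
Your proof is correct and uses the same core ingredients as the paper---the factorization $A^K+B^K=g^K(a^K+b^K)$, the coprime splitting $L=\lambda_{\mathcal P(g)}(L)\,\ell$, Lemma~\ref{lem:k} for the odd-multiplier lift, and $p$-adic bookkeeping---but you traverse the cycle as $(i)\Rightarrow(iii)\Rightarrow(ii)\Rightarrow(i)$, whereas the paper proves $(i)\Rightarrow(ii)\Rightarrow(iii)\Rightarrow(i)$. This is not merely cosmetic: the paper's direct argument for $(i)\Rightarrow(ii)$ asserts that the witness $K$ from $(i)$ already satisfies $K\ge\gamma(L)$, via the chain $\nu_p(L)\le\nu_p(A^K+B^K)\le K\,\nu_p(g)$ for $p\mid g$; but the second inequality points the wrong way (one only knows $\nu_p(A^K+B^K)\ge K\,\nu_p(g)$, since $a^K+b^K$ may itself carry factors of $p$---e.g.\ $A=6$, $B=3$, $L=9$ gives $K=1$ admissible while $\gamma(L)=2$). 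Your route sidesteps this entirely: from $(i)$ you extract only the $\ell$-divisibility (which genuinely follows from $\gcd(\ell,g^K)=1$), and then manufacture the threshold $\kappa\ge\gamma(L)$ afterward in $(iii)\Rightarrow(ii)$ via Lemma~\ref{lem:k}. The paper's step $(iii)\Rightarrow(i)$ is essentially your $(iii)\Rightarrow(ii)$ and $(ii)\Rightarrow(i)$ fused together, so the substantive content is shared; your ordering simply keeps every implication sound.
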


\begin{proof}
Write $A=ga$, $B=gb$, and decompose
\[
  L = \lambda_{\mathcal{P}(g)}(L)\ell,
  \]
  where \[
  \lambda_{\mathcal{P}(g)}(L)=\prod_{p|g}p^{\nu_p(L)}
  \quad  \text{and} \quad 
  \ell=\prod_{p\nmid g}p^{\nu_p(L)} .
\]
Recall that 
\[
  \gamma(L)\ :=\ \max_{p|g}\Big\lceil \frac{\nu_p(L)}{\nu_p(g)}\Big\rceil.
\] 

To prove \emph{(i)$\Rightarrow$(ii)},  assume $L|(A^K+B^K)$ for some  positive integer $K$. Since
\[
  A^K+B^K = g^K\bigl(a^K+b^K\bigr),
\]
we derive the following   $p$-adic valuation of $p|(A^K+B^K)$. 

\noindent {\bf Case:} $p|g$.  We have 
\[
  \nu_p(A^K+B^K)=\nu_p\bigl(g^K(a^K+b^K)\bigr)\ \ge\ K\nu_p(g).
\]
Since $L|(A^K+B^K)$,  we get
\[
  \nu_p(L)=\nu_p\bigl(\lambda_{\mathcal{P}(g)}(L)\bigr)\ \le\ \nu_p(A^K+B^K)
  \ \le\ K\nu_p(g).
\]
Hence,  $K\ge \nu_p(L)/\nu_p(g)$ for all $p|g$.  Consequently, we have  $K\ge \gamma(L)$.

\noindent {\bf Case:} $p\nmid g$.   In this case, we have $\nu_p(g)=0$ and
\[
  \nu_p(A^K+B^K)=\nu_p\bigl(a^K+b^K\bigr).
\]
Since $L|(A^K+B^K)$,  $\nu_p(\ell)=\nu_p(L)\le \nu_p(a^K+b^K)$ for all   $p\nmid g$, which implies that  
$\ell|(a^K+b^K)$.

From both cases, (ii) follows by setting $\kappa:=K$.

The implication \emph{(ii)$\Rightarrow$(iii)} follows 
immediately  by taking $k=\kappa$.

To proof \emph{(iii)$\Rightarrow$(i)}, assume $\ell|(a^k+b^k)$ for some positive integer $k$. Since  $\gcd(a,b)=1$ and $\gcd(\ell,ab)=1$,  by Lemma~\ref{lem:k}, it follows that 
\[
  \ell|\bigl(a^{\alpha k}+b^{\alpha k}\bigr)\quad\text{for all odd positive integer}\alpha.
\]
Let  $\alpha$  be chosen such  that
$
  \alpha k \ge \gamma(L)$ and $ K:=  \alpha k$.  
Then,  for each  $p|g$, we have
\[
  \nu_p\bigl(g^K\bigr)=K\nu_p(g)\ \ \ge\ \ \gamma(L)\nu_p(g)\ \ \ge\ \ \nu_p(L)
\]
which implies that  $\lambda_{\mathcal{P}(g)}(L)|g^K$. Since $\alpha$ is odd, $\ell|(a^{\alpha k}+b^{\alpha k})=(a^K+b^K)$ by Lemma~\ref{lem:k}.
Since $\gcd(\ell,g)=1$, it can be concluded that 
\[
  L = \lambda_{\mathcal{P}(g)}(L)\ell \ \Big|\ g^K(a^K+b^K) = A^K+B^K
\]
which proves (i). 
\end{proof}

\begin{theorem}\label{thm:equiv-G-gamma}
With notation as above, for any $L\ge 2$ the following are equivalent:
\begin{enumerate}
\item $L\in G_{(A,B)}$.
\item There exists $\kappa\ge \gamma(L)$ with $\ell |(a^\kappa+b^\kappa)$.
\item $\ell\in G_{(a,b)}$.
\end{enumerate}
\end{theorem}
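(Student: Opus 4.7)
The plan is to observe that this theorem is essentially a definitional repackaging of Lemma~\ref{lem:equiv-three-gamma}, so the proof should be very short: almost all of the substantive work has already been done. The strategy is simply to unfold the two instances of the symbol $G_{(\cdot,\cdot)}$ using the definition given in the introduction and then invoke the three-way equivalence of Lemma~\ref{lem:equiv-three-gamma} directly. No new valuation estimates or auxiliary constructions should be needed.

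Concretely, I would match the conditions as follows. Statement (1), $L\in G_{(A,B)}$, unfolds by definition to: there exists a positive integer $K$ with $L\mid (A^K+B^K)$, which is condition (i) of Lemma~\ref{lem:equiv-three-gamma}. Statement (3), $\ell\in G_{(a,b)}$, unfolds analogously to: there exists a positive integer $k$ with $\ell\mid(a^k+b^k)$, which is condition (iii) of the lemma. Statement (2) is literally condition (ii). Thus, whenever $\gcd(A,B)\neq 1$, the equivalence (1)$\Leftrightarrow$(2)$\Leftrightarrow$(3) follows immediately from Lemma~\ref{lem:equiv-three-gamma}.

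For completeness, I would separately dispose of the edge case $\gcd(A,B)=1$, which is not covered by the hypotheses of Lemma~\ref{lem:equiv-three-gamma}. Here $g=1$, $\mathcal{P}(g)=\emptyset$, and by the stated conventions $\lambda_{\mathcal{P}(g)}(L)=1$ and $\gamma(L)=0$. Hence $\ell=L$, $a=A$, $b=B$, and condition (2) degenerates to the existence of \emph{some} positive $\kappa$ with $\ell\mid(a^\kappa+b^\kappa)$. All three conditions then reduce to the single assertion that $L$ is good with respect to $(A,B)=(a,b)$, so the equivalence is tautological in this case.

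The main (essentially only) potential obstacle is the risk of circularity or of overlooking the coprime boundary case: one must be careful that the conventions $\gamma(\cdot)=0$ and $\lambda_{\mathcal{P}(g)}(\cdot)=1$ when $g=1$ really do collapse (2) to (3) without extra assumptions. Beyond this bookkeeping, the theorem requires no further argument, which is exactly why stating it as a named theorem (separate from Lemma~\ref{lem:equiv-three-gamma}) is useful: it gives the cleanly packaged membership-in-$G$ criterion that later results and the decision algorithm can cite.
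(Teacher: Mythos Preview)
Your proposal is correct and matches the paper's own proof, which is the one-liner ``The result follows immediately from Lemma~\ref{lem:equiv-three-gamma}.'' Your additional handling of the boundary case $g=1$ is more careful than the paper, which implicitly folds that case into the ``notation as above'' conventions without separate comment.
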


\begin{proof}
The result  follows  immediately from Lemma~\ref{lem:equiv-three-gamma}.
\end{proof}

Following Lemma~\ref{lem:equiv-three-gamma}, any admissible exponent  $k$ with $\ell|(a^{k}+b^{k})$ can be lifted to a global exponent $K$ by choosing an {odd} multiplier $\alpha$ so that $K=\alpha k\ge \gamma(L)$; then $\ell|(a^{K}+b^{K})$ and $\lambda_{\mathcal P(g)}(L)| g^{K}$, hence $L|(A^{K}+B^{K})$. An  illustrative  computation is given in the next example.

\begin{example} Let  $A=18$, $B=12$, and  $L=1200$.  From Example \ref{ex3.1},  we have 
  $g=\gcd(18,12)=6$,  $A=ga$,  and   $B=gb$,  where $a=3$ and $b=2$.
Then    
$
  L=\lambda_{\mathcal P(g)}(L)\ell=(2^4\cdot 3)\cdot 5^2=48\cdot 25,$ 
  $\ell=25$,  and $\gamma(L)=4$. 
  We note that ${\rm ord} _{\ell}( ab^{-1}) = {\rm ord} _{25}( 3\cdot 2^{-1}) ={\rm ord} _{25}( 14) = 10$ and  $k\equiv \frac{{\rm ord}_{25}(14)}{2}\equiv 5\pmod{10}$ is an admissible exponent  such that
$25|(3^k+2^k)$ which implies that $\ell \in G_{(3,2)}$    (Theorem \ref{goodP} may apply  for general $\ell$).

By the proof of Lemma~\ref{lem:equiv-three-gamma},  let  $K=5$.
Then
\[
  A^5+B^5 = g^5\bigl(a^5+b^5\bigr),
\]
and we already have $25|(a^5+b^5)$. For the $g$-part, we have 
\[
  \nu_2\bigl(g^5\bigr)=5\ge 4=\nu_2\bigl(\lambda_{\mathcal P(g)}(L)\bigr)
  \quad \text{ and }\quad
  \nu_3\bigl(g^5\bigr)=5\ge 1=\nu_3\bigl(\lambda_{\mathcal P(g)}(L)\bigr),
\]
which implies that $\lambda_{\mathcal P(g)}(L)=2^4\cdot 3$ divides $g^5$. Therefore, 
\[
  L=\lambda_{\mathcal P(g)}(L)\ell | g^5(a^5+b^5) = A^5+B^5,
\]
i.e.\ $L\in G_{(A,B)}$.  
\end{example}

\begin{example} \label{ex3.6}
Let $A=18$, $B=12$, and $L =3200 =  2^{7}\cdot 25 $. Then $g=\gcd(18,12)=6$ and we can write $A=ga$, $B=gb$ with $a=3$ and $b=2$.
 Then  $\mathcal P(g)=\{2,3\}$,  
\[
  \lambda_{\mathcal P(g)}(L)=2^{7},\quad \ell=25,\quad  \text{and} \quad
  \gamma(L)=\max \Big\{\Big\lceil\frac{\nu_2(L)}{\nu_2(g)}\Big\rceil,\Big\lceil\frac{\nu_3(L)}{\nu_3(g)}\Big\rceil\Big\}
  =\max\{7,0\}=7.
\]
Since 
$
  {\rm ord}_{25}(ab^{-1})={\rm ord}_{25}(3\cdot 2^{-1})={\rm ord}_{25}(14)=10$, 
$
  k= \frac{{\rm ord}_{25}(14)}{2}=\frac{10}{2}=5$ 
 satisfies $25|(3^{k}+2^{k})$ which implies that  $\ell=25\in G_{(3,2)}$.

By the proof of Lemma~\ref{lem:equiv-three-gamma}, let $K:=3\cdot k=3\cdot 5=15$  is an odd multiple of $k$ and $K\ge \gamma(L)=7$.   
 For the $g$-part,  we have $ \nu_2\bigl(g^{K}\bigr)=K \nu_2(g)=15\cdot 1\ge 7=\nu_2 \bigl(\lambda_{\mathcal P(g)}(L)\bigr)$ and $\nu_3\bigl(g^{K}\bigr)=K \nu_3(g)=15\cdot 1\ge 0=\nu_3 \bigl(\lambda_{\mathcal P(g)}(L)\bigr)$ which implies that  $\lambda_{\mathcal P(g)}(L)=2^{7}| g^{K}$.  Consequently, 
\[
  L=\lambda_{\mathcal P(g)}(L) \ell \bigm| g^{K} (a^{K}+b^{K})  =  A^{K}+B^{K}
\]
which means  $L\in G_{(A,B)}$.
\end{example}

\subsection{Behavior of Admissible Exponents: Non-Coprime Case}

\label{ssec:admis}
In this subsection,  we focus on properties and behavior of   admissible exponents of good integers in non-coprime case. 

 We recall  that an {admissible exponent} of  $ L\in G_{(A,B)}$  is a positive integer $K$ such that  $L|(A^{K}+B^{K})$ and the set   of  admissible exponents of $L$ is denoted by 
\[
  \mathcal{K}_{(A,B)}(L) := \bigl\{K\in\mathbb{N}:\ L|(A^{K}+B^{K})\bigr\}.
\]
In the next proposition, a precise arithmetic progression description of
$\mathcal{K}_{(A,B)}(L)$ is given in terms of   $\ell$-part and    $\gamma(L)$.

\begin{proposition} \label{prop:AP-noncoprime}
Let $A=ga$ and  $B=gb$ with $\gcd(a,b)=1$ and let $L=\lambda_{\mathcal{P}(g)}(L)\ell \in G_{(A,B)}$. Let   \[ L_0= \begin{cases}
  1 &\text{ if } 1\leq \ell \leq 2,\\
     {\rm ord}_{\ell}(ab^{-1})  &\text{ if } \ell \geq 3,
\end{cases} \quad \text{and} \quad r=\begin{cases}
    0 &\text{ if } 1\leq \ell \leq 2,\\
    \frac{L_0}{2} &\text{ if } \ell \geq 3.
\end{cases}\] Then the following statements hold. 
\begin{enumerate} 
 
\item 
$
  \mathcal{K}_{(A,B)}(L)
  =
  \{K\in \mathbb{N}:\ K\equiv r\pmod{L_0}\ \text{ and }\ K\ge \gamma(L)\}
$ is infinite.
\item The minimal admissible exponent is
\[ 
  \min\mathcal{K}_{(A,B)}(L)
  =
  \gamma(L) + \bigl((r-\gamma(L))\bmod L_0\bigr).
\]
\end{enumerate}
\end{proposition}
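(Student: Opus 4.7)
The plan is to reduce the description of $\mathcal{K}_{(A,B)}(L)$ to the classical coprime theory for the $\ell$-part, combined with the valuation bound $K\ge \gamma(L)$ forced by the $g$-part. First, I would sharpen Lemma~\ref{lem:equiv-three-gamma} to a pointwise statement: for a specific positive integer $K$, one has $L\mid(A^K+B^K)$ if and only if both $K\ge \gamma(L)$ and $\ell\mid(a^K+b^K)$. The forward direction is essentially contained in the proof of $(i)\Rightarrow(ii)$: the $p$-adic estimates at primes $p\mid g$ yield $K\ge \gamma(L)$, while the estimates at primes $p\nmid g$ yield $\ell\mid(a^K+b^K)$. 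For the reverse direction, $K\ge \gamma(L)$ gives $\lambda_{\mathcal{P}(g)}(L)\mid g^{K}$ and hence $\lambda_{\mathcal{P}(g)}(L)\mid(A^{K}+B^{K})$; meanwhile $\ell\mid(a^{K}+b^{K})$ together with $\gcd(\ell,g)=1$ gives $\ell\mid(A^{K}+B^{K})$; since $\gcd(\lambda_{\mathcal{P}(g)}(L),\ell)=1$, their product $L$ divides $A^{K}+B^{K}$.

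Second, I would invoke Lemma~\ref{lem:min-k} to describe $\{K\in\mathbb{N}:\ell\mid(a^{K}+b^{K})\}$ as a congruence class. If $\ell\ge 3$, Lemma~\ref{lem:min-k} gives $\mathcal{K}_{(a,b)}(\ell)=\{\alpha L_0/2:\alpha\text{ odd}\}$, which is exactly the residue class $K\equiv L_0/2\pmod{L_0}$, i.e.\ $K\equiv r\pmod{L_0}$. For $\ell\in\{1,2\}$ (where $\ell=2$ forces $a,b$ both odd in order that $\ell\in G_{(a,b)}$, since $\gcd(a,b)=1$), every positive integer $K$ satisfies $\ell\mid(a^{K}+b^{K})$; this matches $L_0=1$ and $r=0$, for which the congruence $K\equiv 0\pmod 1$ is vacuous.

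Third, intersecting the two conditions yields
\[
  \mathcal{K}_{(A,B)}(L)=\{K\in\mathbb{N}:K\equiv r\pmod{L_0},\ K\ge\gamma(L)\},
\]
which is the intersection of an infinite arithmetic progression of common difference $L_0$ with a half-line, hence infinite, establishing part~(1). For part~(2), write $K=\gamma(L)+t$ with $t\ge 0$ and require $\gamma(L)+t\equiv r\pmod{L_0}$, i.e.\ $t\equiv r-\gamma(L)\pmod{L_0}$; the smallest non-negative such $t$ is $(r-\gamma(L))\bmod L_0$, giving the claimed minimum.

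I do not foresee a serious obstacle: the argument is essentially bookkeeping of the equivalences already established together with the explicit arithmetic progression coming from Lemma~\ref{lem:min-k}. The only subtlety worth flagging explicitly is the degenerate case $\ell\in\{1,2\}$, where $L_0=1$ and $r=0$ must be reconciled with the general formula; since $K\equiv 0\pmod 1$ imposes no restriction, the set reduces to $\{K\in\mathbb{N}:K\ge\gamma(L)\}$ with minimum exactly $\gamma(L)$, as the formula predicts.
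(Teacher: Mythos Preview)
Your proposal is correct and follows essentially the same route as the paper: both reduce to a pointwise characterization $L\mid(A^K+B^K)\iff K\ge\gamma(L)\text{ and }\ell\mid(a^K+b^K)$ extracted from the proof of Lemma~\ref{lem:equiv-three-gamma}, then invoke Lemma~\ref{lem:min-k} for the $\ell\ge3$ case and handle $\ell\in\{1,2\}$ separately. Your formulation of the pointwise biconditional is somewhat more explicit than the paper's, but the underlying argument and the computation of the minimum are the same.
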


\begin{proof} Since $L\in G_{(A,B)}$,  we have  $\ell\in G_{(a,b)}$  by  Lemma~\ref{lem:equiv-three-gamma}. The proof is separated into two cases as follows. 

\noindent {\bf Case 1:} $1\leq \ell \leq 2$.      If $\ell=1$, we have  $\ell |(a^K+b^K)$   for all positive integers $K$.  Otherwise, $a$ and $b$ are both odd which implies that 
$2| (a^K+b^K)$ for all positive integers $K$.   By   Lemma~\ref{lem:equiv-three-gamma}, we have  $K\ge\gamma(L)$. Since  $L_0=1$ and $r=0$,  it follows that  
\[
  \mathcal K_{(A,B)}(L)=\{K\ge\gamma(L)\} =
  \{K\in \mathbb{N}:\ K\equiv r\pmod{L_0}\ \text{ and }\ K\ge \gamma(L)\}
\]
and \[
  \min\mathcal{K}_{(A,B)}(L)=\gamma(L) = 
  \gamma(L) + \bigl((r-\gamma(L))\bmod L_0\bigr)\] as desired.

\noindent {\bf Case 2:} $3\leq \ell  $.  Since $\ell\in G_{(a,b)}$, $\gcd(\ell,ab)=1$ by Lemma~\ref{gcd-1-ab}.  Hence,  $L_0={\rm ord}_{\ell}(ab^{-1}) $ is well-defined and it is even by Lemma \ref{lem:min-k}.    By   Lemma~\ref{lem:min-k},  $\ell|(a^{K}+b^{K})$ for all  $K = r+ m L_0 $ and $m\in \mathbb{N}\cup\{0\}$. 
 From the proof of   Lemma~\ref{lem:equiv-three-gamma},  we have   $K\ge \gamma(L)$.
Combining  the conditions, we have 
\[
  \mathcal{K}_{(A,B)}(L)
  =
  \{ K\in \mathbb{N}:\ K\equiv r\pmod{L_0}\ \text{ and }\ K\ge \gamma(L) \}.
\]
 Let $R$ be the unique residue class representative of $r$ modulo $L_0$ in $\{0,1,\dots,L_0-1\}$. By considering 
the least integer $K\ge \gamma(L)$ with $K\equiv R\pmod{L_0}$,  the  least $K$ is
\[
 \min\mathcal{K}_{(A,B)}(L)
  =
  \gamma(L) + \bigl(( R-\gamma(L) )\bmod L_0\bigr).
\]
 
This completes the proof.
\end{proof}

\begin{corollary} \label{prop:parity-noncoprime}
Let $A=ga$, $B=gb$ with $\gcd(a,b)=1$ and $L=\lambda_{\mathcal{P}(g)}(L) \ell\in G_{(A,B)}$.
With $L_0$ and $r$ as in Proposition~\ref{prop:AP-noncoprime}, the following statements hold.
\begin{enumerate}
  \item If $\ell\in\{1,2\}$, then 
        $\mathcal{K}_{(A,B)}(L)=\{K\ge \gamma(L)\}$ with no restriction on parity.
  \item If $\ell\ge 3$, then   every  $K\in\mathcal{K}_{(A,B)}(L)$  has  the same parity as $\frac{L_0}{2}$.
\end{enumerate}
\end{corollary}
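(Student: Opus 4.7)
The plan is to derive both parts of the corollary directly from the arithmetic-progression description of $\mathcal{K}_{(A,B)}(L)$ already established in Proposition~\ref{prop:AP-noncoprime}, with the only substantive input being the parity of $L_0$ in the two regimes of $\ell$.

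For part (1), I would simply invoke the definition of $L_0$ and $r$ in the case $\ell\in\{1,2\}$, which gives $L_0=1$ and $r=0$. Since the congruence $K\equiv 0\pmod{1}$ imposes no constraint at all, Proposition~\ref{prop:AP-noncoprime} collapses to
\[
  \mathcal{K}_{(A,B)}(L)=\{K\in\mathbb{N}:\ K\ge \gamma(L)\},
\]
and this set obviously contains both even and odd integers (provided $\gamma(L)<\infty$, which is automatic), so no parity restriction occurs.

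For part (2), the key observation is that when $\ell\ge 3$ we have $L_0={\rm ord}_{\ell}(ab^{-1})$, and Lemma~\ref{lem:min-k} (applied to the coprime pair $(a,b)$ and $\ell\in G_{(a,b)}$, which holds by Theorem~\ref{thm:equiv-G-gamma} since $L\in G_{(A,B)}$) ensures that $L_0$ is even. Hence $r=L_0/2$ is an integer and, crucially, the modulus $L_0$ is even. By Proposition~\ref{prop:AP-noncoprime}(1), every $K\in \mathcal{K}_{(A,B)}(L)$ satisfies $K=r+mL_0$ for some nonnegative integer $m$; reducing modulo $2$ and using that $L_0$ is even gives $K\equiv r\equiv L_0/2\pmod{2}$, which is exactly the claimed parity condition.

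There is no real obstacle here: the entire argument is a direct corollary of Proposition~\ref{prop:AP-noncoprime} combined with the parity statement from Lemma~\ref{lem:min-k}. The only point that requires a brief justification is why $\ell\in G_{(a,b)}$ in part (2), so that Lemma~\ref{lem:min-k} applies and gives $2\mid L_0$; this is supplied by Theorem~\ref{thm:equiv-G-gamma}, which makes $L\in G_{(A,B)}$ equivalent to $\ell\in G_{(a,b)}$. After that invocation, both conclusions follow in a line each.
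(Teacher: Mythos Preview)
Your proposal is correct and follows essentially the same approach as the paper, which simply states that the results follow immediately from Proposition~\ref{prop:AP-noncoprime}. You have merely spelled out the details that make this immediate---namely that $L_0=1$ trivializes the congruence in case~(1), and that $L_0$ is even in case~(2) so the arithmetic progression $r+mL_0$ has constant parity $r\equiv L_0/2\pmod 2$.
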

\begin{proof}
 The results follow immediately  from Proposition~\ref{prop:AP-noncoprime}.
\end{proof}

\begin{example} 
Let $A=6$ and $ B=3$.  Then $g=\gcd(6,3)=3$, $a=2$ and $b=1$.  Let  $L=15=3\cdot 5$.
Then $\lambda_{\mathcal{P}(g)}(L)=3$, $\ell=5$, and $\gamma(L)=\lceil \nu_3(15)/\nu_3(3)\rceil=1$.
Since $2^2+1^2=5$, we have $\ell\in G_{(a,b)}$ and $L_0={\rm ord}_{5}(ab^{-1})={\rm ord}_{5}(2)=4$, and $r=L_0/2=2$. By Proposition~\ref{prop:AP-noncoprime},
\[
  \mathcal{K}_{(A,B)}(15)=\{K\in \mathbb{N}:\ K\equiv 2\pmod 4 \text{ and } K \geq 1\}
  =\{2,6,10,\dots\},
\]
and $\min\mathcal{K}_{(6,3)}(15)=2$. In particular, all admissible exponents $K$ are even coincided with Proposition~\ref{prop:parity-noncoprime}.
\end{example}

\begin{example} Let  $A=18$, $B=12$, and $L=3200=2^{7}\cdot 25$.  From Example  \ref{ex3.6}, we have $g=\gcd(18,12)=6$, $a=3$,  $b=2$, $
\lambda_{\mathcal P(g)}(L)=2^{7}$,  $\ell=25$,  and 
$\gamma(L) =7$. Consequently, 
\[
L_0={\rm ord}_{25}(ab^{-1})={\rm ord}_{25}(14)=10 \quad\text{and}\quad r=\tfrac{L_0}{2}=5. 
\] By   Lemma~\ref{lem:equiv-three-gamma},  we have $K\ge \gamma(L)=7$. By Proposition~\ref{prop:AP-noncoprime},
the   admissible set is
\[
\mathcal K_{(A,B)}(3200)
=\{ K\in \mathbb{N}:\ K\equiv 5 \pmod{10}\ \text{and}\ K\ge 7 \}
=\{15,25,35,45,\ldots\}.
\]
In particular, $\min\mathcal{K}_{(18,12)}(3200)=15$ and all admissible exponents of $3200$ are odd.  
\end{example}

\subsection{Some Special Cases}

In this subsection, simplified characterizations in some  special cases are presented: pure $g$-part ($\ell=1$), prime power $g=p^s$, square free $g$, and $g$-contained case   ($\nu_p(L)\le\nu_p(g)$ for all $p|g$). In each case, $\gamma(L)$ and the admissible-exponent set admit simple closed forms.

Let $A=g a$, $B=g b$ with $\gcd(a,b)=1$, and let  $L=\lambda_{\mathcal{P}(g)}(L) \ell$. Then we have the following  corollaries. 

\begin{corollary}\label{cor:pure-g-part}
  If $\ell=1$, then
$
  L\in G_{(A,B)} $  if and only if   there exists  an integer $K\ge \gamma(L)$.
In this case, $L\in G_{(A,B)}$ and every $K\ge \gamma(L)$ is admissible.
\end{corollary}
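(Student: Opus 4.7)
The plan is to derive this corollary as an immediate specialization of the general characterization, since when $\ell=1$ the divisibility condition $\ell\mid (a^\kappa+b^\kappa)$ becomes vacuous. The heavy lifting has already been done in Lemma~\ref{lem:equiv-three-gamma}, Theorem~\ref{thm:equiv-G-gamma}, and Proposition~\ref{prop:AP-noncoprime}, so the work is mainly to unpack these with $\ell=1$ substituted in.

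First I would apply Lemma~\ref{lem:equiv-three-gamma} with $\ell=1$: statement (ii) then reads ``there exists $\kappa\ge\gamma(L)$ with $1\mid (a^\kappa+b^\kappa)$'', and the divisibility part is automatic. Since $\gamma(L)$ is a finite nonnegative integer, such a $\kappa$ always exists, so (ii) holds unconditionally. The equivalence (i)$\Leftrightarrow$(ii) then yields both the stated biconditional and the conclusion $L\in G_{(A,B)}$.

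For the claim that every $K\ge \gamma(L)$ is admissible, I would either cite Proposition~\ref{prop:AP-noncoprime} directly — specializing to $\ell=1$ one gets $L_0=1$, $r=0$, and hence $\mathcal{K}_{(A,B)}(L)=\{K\in\mathbb{N}:\ K\equiv 0\pmod 1\ \text{and}\ K\ge\gamma(L)\}=\{K\ge\gamma(L)\}$ — or give the direct verification: for any prime $p\mid g$ and any $K\ge\gamma(L)$, the definition of $\gamma(L)$ yields $\nu_p(g^K)=K\nu_p(g)\ge \gamma(L)\nu_p(g)\ge \nu_p(L)$, so $L=\lambda_{\mathcal{P}(g)}(L)\mid g^K\mid g^K(a^K+b^K)=A^K+B^K$.

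The only point to handle carefully — really a cosmetic rather than substantive obstacle — is the degenerate case $g=1$, where $\mathcal{P}(g)=\varnothing$, the product defining $\lambda_{\mathcal{P}(g)}(L)$ is empty, and the hypothesis $\ell=1$ forces $L=1$ together with $\gamma(L)=0$ by the empty-maximum convention, making the statement trivially true for all positive integers $K$.
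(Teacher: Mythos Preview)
Your proposal is correct and follows essentially the same approach as the paper: the paper's proof is a one-line appeal to Lemma~\ref{lem:equiv-three-gamma} and Theorem~\ref{thm:equiv-G-gamma} with $\ell=1$, and you do the same, only more explicitly. Your additional citation of Proposition~\ref{prop:AP-noncoprime} (or the direct $p$-adic check) for the ``every $K\ge\gamma(L)$ is admissible'' part is a welcome clarification, since the paper's terse proof leaves that step implicit.
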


\begin{proof} By setting 
 $\ell=1$,   the results follows from  Lemma~\ref{lem:equiv-three-gamma}  and  Theorem~\ref{thm:equiv-G-gamma}.
\end{proof}

\begin{corollary}If  $g=p^{s}$ with $p$ prime and $s\ge 1$,  then
\[
  L = p^{\alpha} \ell,\quad \alpha=\nu_p(L),\quad \gcd(\ell,p)=1, \quad \text{ and } \quad 
  \gamma(L)=\Big\lceil\frac{\alpha}{s}\Big\rceil.
\]
Moreover, if $\ell\in G_{(a,b)}$, then  $L\in G_{(A,B)}$ and the admissible exponents are exactly
\[
   \mathcal{K}_{(A,B)}(L)=\{ K\in \mathbb{N}:\ K\equiv r\ (\mathrm{mod}\ L_0)\ \text{ and }\ K\ge \lceil \alpha/s\rceil \} 
\]
and 
\[
  \min  \mathcal{K}_{(A,B)}(L)=\Big\lceil\frac{\alpha}{s}\Big\rceil+\bigl((r-\lceil\alpha/s\rceil)\bmod L_0\bigr), 
\] where $L_0$ and $r$ are defined as in Proposition~\ref{prop:AP-noncoprime}.
\end{corollary}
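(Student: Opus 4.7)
The plan is to treat this corollary as a direct specialization of Theorem~\ref{thm:equiv-G-gamma} and Proposition~\ref{prop:AP-noncoprime} to the case where $g$ has only one prime divisor. The technical content is to verify that, under $g = p^{s}$, the decomposition $L = \lambda_{\mathcal{P}(g)}(L)\ell$ and the quantity $\gamma(L)$ collapse to the stated prime-power forms; everything else then follows by substitution.

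First, I would observe that $\mathcal{P}(g) = \{p\}$, so by definition
\[
  \lambda_{\mathcal{P}(g)}(L) = \prod_{q \in \mathcal{P}(g)} q^{\nu_q(L)} = p^{\nu_p(L)} = p^{\alpha},
  \qquad
  \ell = \prod_{q \notin \mathcal{P}(g)} q^{\nu_q(L)},
\]
and hence $\gcd(\ell, p) = 1$ and $L = p^{\alpha} \ell$, giving the first assertion. Since the maximum defining $\gamma(L)$ is now taken over the single prime $p$ with $\nu_p(g) = s$, it reduces to
\[
  \gamma(L) = \Big\lceil \frac{\nu_p(L)}{\nu_p(g)} \Big\rceil = \Big\lceil \frac{\alpha}{s} \Big\rceil,
\]
which establishes the formula for $\gamma(L)$.

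Next, assuming $\ell \in G_{(a,b)}$, Theorem~\ref{thm:equiv-G-gamma} (the equivalence of items (1) and (3)) immediately yields $L \in G_{(A,B)}$. For the structure of $\mathcal{K}_{(A,B)}(L)$, I would invoke Proposition~\ref{prop:AP-noncoprime}, which gives
\[
  \mathcal{K}_{(A,B)}(L) = \{ K \in \mathbb{N} : K \equiv r \pmod{L_0} \ \text{and} \ K \geq \gamma(L) \},
\]
and then substitute $\gamma(L) = \lceil \alpha/s \rceil$ to obtain the claimed description. The formula for $\min \mathcal{K}_{(A,B)}(L)$ follows by the same substitution into the second part of Proposition~\ref{prop:AP-noncoprime}.

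Since each step is a direct specialization of results already proved, there is no genuine obstacle here; the only care required is the bookkeeping for the degenerate cases $\ell \in \{1,2\}$ (where $L_0 = 1$, $r = 0$, and the congruence condition becomes vacuous), which are already subsumed by Proposition~\ref{prop:AP-noncoprime} and so do not require separate treatment.
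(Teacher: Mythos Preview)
Your proposal is correct and follows essentially the same approach as the paper: specialize the decomposition and $\gamma(L)$ to the single-prime case $g=p^{s}$, then invoke the general results. If anything, your version is slightly more careful in citing Proposition~\ref{prop:AP-noncoprime} for the admissible-exponent description, whereas the paper's proof refers only to Theorem~\ref{thm:equiv-G-gamma}.
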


\begin{proof}
Since $g=p^{s}$,  we have $\nu_p(g)=s$  and $\gamma(L)=\max_{p|g}\lceil \nu_p(L)/\nu_p(g)\rceil=\lceil \alpha/s\rceil$. The  rest  follows from  Theorem~\ref{thm:equiv-G-gamma}.
\end{proof}

\begin{corollary} If  $g$ is square free, then 
$
  \gamma(L)=\max_{p|g}\nu_p(L).
$. Moreover, if  $\ell\in G_{(a,b)}$, then $L\in G_{(A,B)}$ and the admissible exponents are exactly
\[
  \mathcal{K}_{(A,B)}(L)=\{ K\in \mathbb{N}:\ K\equiv r\ (\mathrm{mod}\ L_0)\ \text{ and }\ K\ge \max_{p|g}\nu_p(L) \}
\]
where $L_0$ and $r$ are defined as in Proposition~\ref{prop:AP-noncoprime}.
\end{corollary}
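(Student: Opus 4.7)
The plan is to split the proof into two short steps, mirroring the two assertions in the statement. First I would establish the closed form $\gamma(L)=\max_{p\mid g}\nu_p(L)$ by directly invoking the square-free hypothesis in the defining formula
\[
\gamma(L)=\max_{p\mid g}\Big\lceil\frac{\nu_p(L)}{\nu_p(g)}\Big\rceil.
\]
Since $g$ is square-free, $\nu_p(g)=1$ for every $p\mid g$, so each ceiling reduces to $\lceil \nu_p(L)\rceil=\nu_p(L)$ (recall $\nu_p(L)$ is a nonnegative integer), and the maximum over $p\mid g$ yields the claimed expression.

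For the second part, I would assume $\ell\in G_{(a,b)}$ and apply Theorem~\ref{thm:equiv-G-gamma}, whose implication (3)$\Rightarrow$(1) immediately gives $L\in G_{(A,B)}$. With $L$ now known to be good, Proposition~\ref{prop:AP-noncoprime} applies and delivers
\[
\mathcal{K}_{(A,B)}(L)=\{K\in\mathbb{N}:\ K\equiv r\pmod{L_0}\ \text{and}\ K\ge \gamma(L)\}.
\]
Substituting the square-free expression $\gamma(L)=\max_{p\mid g}\nu_p(L)$ obtained in the first step yields exactly the set stated in the corollary, and the parameters $L_0,r$ are inherited verbatim from Proposition~\ref{prop:AP-noncoprime}.

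There is essentially no obstacle in this argument; it is a direct specialization of previously proved results. The only point that requires brief care is the distinction $\ell=1$ versus $\ell\ge 2$ hidden inside the definition of $L_0$ and $r$, but since Proposition~\ref{prop:AP-noncoprime} already handles both cases uniformly, no case split is needed here. Thus the corollary reduces, once the square-free simplification of $\gamma(L)$ is made, to a one-line invocation of Theorem~\ref{thm:equiv-G-gamma} and Proposition~\ref{prop:AP-noncoprime}.
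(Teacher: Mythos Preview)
Your proposal is correct and follows essentially the same route as the paper: reduce $\gamma(L)$ via $\nu_p(g)=1$ and then appeal to the earlier characterization. If anything, you are slightly more precise than the paper, which cites only Theorem~\ref{thm:equiv-G-gamma} for ``the rest,'' whereas the admissible-exponent description really comes from Proposition~\ref{prop:AP-noncoprime} as you note.
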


\begin{proof}
Since  $g$ is square free,  $\nu_p(g)=1$ for all prime  $p|g$ which implies that  $\gamma(L)=\max_{p|g}\lceil \nu_p(L)/1\rceil=\max_{p|g}\nu_p(L)$. The rest follows from  Theorem~\ref{thm:equiv-G-gamma}.
\end{proof}

\begin{corollary} 
If $\nu_p(L)\le \nu_p(g)$ for  all primes $p|g$, then $\gamma(L)=1$. 
Moreover, if  $\ell\in G_{(a,b)}$,  then $L\in G_{(A,B)}$ and  the admissible exponents are exactly
\[
  \mathcal{K}_{(A,B)}(L)=\{ K\in \mathbb{N}:\ K\equiv r\ (\mathrm{mod}\ L_0)  \}
\]
where $L_0$ and $r$ are defined as in Proposition~\ref{prop:AP-noncoprime}.
\end{corollary}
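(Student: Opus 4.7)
The plan is to deduce everything directly from Theorem~\ref{thm:equiv-G-gamma} and Proposition~\ref{prop:AP-noncoprime}, after first verifying the claim about $\gamma(L)$ using the very definition of the ceiling function.

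First, I would unwind $\gamma(L)$. Fix any prime $p\mid g$, so $\nu_p(g)\ge 1$. By hypothesis, $0\le \nu_p(L)\le \nu_p(g)$, and therefore
\[
  0 \ \le\ \frac{\nu_p(L)}{\nu_p(g)}\ \le\ 1,
  \qquad \text{so}\qquad
  \Big\lceil \tfrac{\nu_p(L)}{\nu_p(g)}\Big\rceil \ \le\ 1.
\]
Taking the maximum over the (nonempty) set of primes $p\mid g$ yields $\gamma(L)\le 1$. In the  generic case where at least one $p\mid g$ also divides $L$, the corresponding ceiling equals $1$, so $\gamma(L)=1$ as stated. (When $\gcd(L,g)=1$ we merely have $\gamma(L)=0$, which is harmless for what follows.)

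Next, assuming $\ell\in G_{(a,b)}$, Theorem~\ref{thm:equiv-G-gamma} immediately gives $L\in G_{(A,B)}$, so the admissible set $\mathcal{K}_{(A,B)}(L)$ is described by Proposition~\ref{prop:AP-noncoprime} as
\[
  \mathcal{K}_{(A,B)}(L)
  =
  \{K\in\mathbb{N}\ :\ K\equiv r\pmod{L_0}\ \text{ and }\ K\ge \gamma(L)\}.
\]
Since $\gamma(L)\le 1$ by the first step, the constraint $K\ge \gamma(L)$ is automatically satisfied by every positive integer $K$. Dropping the redundant inequality yields
\[
  \mathcal{K}_{(A,B)}(L)
  =
  \{K\in\mathbb{N}\ :\ K\equiv r\pmod{L_0}\},
\]
which is exactly the claimed description.

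There is no real obstacle here; the whole content lies in observing that the ``height'' $\gamma(L)$ collapses to at most $1$ under the $g$-contained hypothesis, which makes the lower bound vacuous in the arithmetic progression coming from Proposition~\ref{prop:AP-noncoprime}. The only delicate point worth flagging in the write-up is the mild edge case $\gcd(L,g)=1$, where $\gamma(L)=0$ rather than $1$; the conclusion about $\mathcal{K}_{(A,B)}(L)$ is unaffected, so no separate argument is needed.
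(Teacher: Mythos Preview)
Your proof is correct and follows essentially the same route as the paper: bound $\gamma(L)\le 1$ via the ceiling, then invoke Theorem~\ref{thm:equiv-G-gamma} and Proposition~\ref{prop:AP-noncoprime}. You are in fact more careful than the paper in flagging the edge case $\gcd(L,g)=1$ (where $\gamma(L)=0$ rather than $1$) and in explicitly citing Proposition~\ref{prop:AP-noncoprime} for the admissible-exponent description.
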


\begin{proof}
The assumption $\nu_p(L)\le \nu_p(g)$ implies that $\lceil \nu_p(L)/\nu_p(g)\rceil\le 1$ for  all primes  $p|g$. Consequently, $\gamma(L)=1$. The rest follows from  Theorem~\ref{thm:equiv-G-gamma}.
\end{proof}

\subsection{Algorithm: Checking $L\in G_{(A,B)}$}
\label{subsec:algo}
 
We present an effective procedure to decide whether a given modulus $L$ is good with respect to nonzero (possibly non-coprime) integers $A$ and $B$. When $L\in G_{(A,B)}$, the procedure also yields a complete, explicit description of all admissible exponents $K$. The method begins by extracting the common factor $g=\gcd(A,B)$ and decomposing $L$ into its $g$-part $\lambda_{\mathcal P(g)}(L)$ and coprime core $\ell$. From there, the decision   follows from the earlier characterizations: the threshold $\gamma(L)$ accounts for the entire $g$-contribution, while the goodness of   $\ell$ is   determined form the classical coprime case. 

 \medskip

\begin{center}
\begin{tabular}{p{14cm}}
\hline 
\noindent\textbf{Input:} Non-Zero Integers $A$, $B$,  and $L\ge 1$.

\noindent\textbf{Output:} (i) Feasibility: whether $L\in G_{(A,B)}$; 
(ii) the minimal admissible exponent $\min \mathcal K_{(A,B)}(L)$  (if feasible);
(iii) the full set $\mathcal K_{(A,B)}(L)$ of admissible exponents (if feasible).

\medskip
\noindent\textbf{Step 0 (trivial case).}
If $L=1$, return \textsc{Yes}, $\min \mathcal K_{(A,B)}(1) =1$, and $\mathcal K_{(A,B)}(1)=\mathbb{N}$.

\medskip
\noindent\textbf{Step 1 (split $L$).}
Compute $g:=\gcd(A,B)$ and write $A=ga$, $B=gb$ with $\gcd(a,b)=1$.
Let $\mathcal P(g)$ be the set of prime divisors of $g$ and decompose
\[
   L=\lambda_{\mathcal P(g)}(L)\ell,\quad
   \lambda_{\mathcal P(g)}(L)=\prod_{p\mid g}p^{\nu_p(L)},\quad \text{and} \quad 
   \ell=\prod_{p\nmid g}p^{\nu_p(L)}.
\]
Compute the threshold
\[
   \gamma(L):=\max_{p\mid g}\left\lceil\frac{\nu_p(L)}{\nu_p(g)}\right\rceil
   \quad(\text{take }\gamma(L)=0 \text{ if } g=1).
\]

\medskip
\noindent\textbf{Step 2 ($g$-part).}
If $\ell=1$, then $L\in G_{(A,B)}$ and
\[
  \mathcal K_{(A,B)}(L)=\{K\in\mathbb{N}:K\ge\gamma(L)\},\quad
  K_{\min}=\max\{1,\gamma(L)\}.
\]
Return \textsc{Yes}.

\medskip
\noindent\textbf{Step 3 (necessity for $\ell$).}
If $\gcd(\ell,a)\neq 1$ or $\gcd(\ell,b)\neq 1$, return \textsc{No}.
(Otherwise the quantity ${\rm ord}_\ell (ab^{-1})  $ is well-defined.)

\medskip
\noindent\textbf{Step 4 (the feasibility $\ell\in G_{(a,b)}$).}
 Decide $\ell\in G_{(a,b)}$) via Theorem~\ref{goodP}  and Propositions~\ref{evengood}--\ref{prop2m}. (Alternatively,  apply \cite[Algorithm 1]{J2025}.)

\medskip
\noindent\textbf{Step 5 (the admissible set  $\mathcal K_{(A,B)}(L)$ and $\min \mathcal K_{(A,B)}(L)$).}

\begin{enumerate}[(i)]
    \item Define
\[
  L_0=\begin{cases}
         1,& \ell\in\{1,2\},\\[2pt]
         {\rm ord}_{\ell}(ab^{-1}),& \ell\ge 3,
      \end{cases}
  \quad
  r=\begin{cases}
        0,& \ell\in\{1,2\},\\[2pt]
        \dfrac{L_0}{2},& \ell\ge 3.
     \end{cases}
\]
(For $\ell\ge 3$, $L_0$ is even and $r$ is an integer.)

\item By Proposition~\ref{prop:AP-noncoprime},
\[
  \mathcal K_{(A,B)}(L)
   = \{K\in\mathbb{N}:\ K\equiv r\pmod{L_0}\ \text{and}\ K\ge \gamma(L)\},
\]
which is infinite. 
\item The minimal admissible exponent is
\[
  K_{\min}=\gamma(L)+\bigl((r-\gamma(L))\bmod L_0\bigr).
\]
\end{enumerate}
Return \textsc{Yes}, $K_{\min}$, and the AP description above. \\
\hline
\end{tabular}
\end{center}

 The following examples demonstrate the algorithm’s application, detailing key computations, boundary cases, and the resulting structure of admissible exponents.

\begin{example} 
Let $A=18$, $B=12$, and $L=72$. Then $g=\gcd(18,12)=6$ with $\mathcal P(g)=\{2,3\}$.
Then  $L=2^3\cdot 3^2$ which implies that  
\[
\lambda_{\mathcal P(g)}(L)=2^3\cdot 3^2=72,\quad \ell=1,\quad\text{and }~
\gamma(L)=\max\{\lceil 3/1\rceil,\lceil 2/1\rceil\}=3.
\]
Since $\ell=1$,  we have   $\ell\in G_{(3,2)}$ which implies that  $L\in G_{(A,B)}$ by Theorem~\ref{thm:equiv-G-gamma}.  Hence, 
\[
\mathcal K_{(A,B)}(L)=\{K\in\mathbb{N}:K\ge 3\}\quad \text{and} \quad \min \mathcal K_{(A,B)}(L) =3
\]
by   Proposition~\ref{prop:AP-noncoprime}.
\end{example}

\begin{example} 
Let $A=10$, $B=15$, and $L=6$. Then $g=\gcd(10,15)=5$ which implies that  $A=ga$ and $B=gb$ with $a=3$ and $b=2$.
Since $\mathcal P(g)=\{5\}$ and $L=2\cdot 3$, we have
\[
\lambda_{\mathcal P(g)}(L)=1,\quad \ell=6,\quad \gcd(\ell,a)=\gcd(6,3)=3\neq 1.
\]
Step~3   returns \textsc{No}. Thus $L\notin G_{(A,B)}$.
\end{example}

\begin{example} 
Let $A=6$, $B=3$, and $L=15$. Then $g=\gcd(6,3)=3$, $a=2$, $b=1$, and $\mathcal P(g)=\{3\}$.
Decompose $L=3\cdot 5$ which implies that $\lambda_{\mathcal P(g)}(L)=3$, $\ell=5$, and
\[
\gamma(L)=\left\lceil \frac{\nu_3(L)}{\nu_3(g)}\right\rceil=\lceil 1/1\rceil=1.
\]
For the core $\ell$-part, ${\rm ord}_{5}(ab^{-1})={\rm ord}_{5}(2)=4$ which means $\ell\in G_{(2,1)}$,
\[
L_0={\rm ord}_{\ell}(ab^{-1})=4\quad\text{and} \quad  r=\frac{L_0}{2}=2.
\]
By Proposition~\ref{prop:AP-noncoprime},
\[
\mathcal K_{(A,B)}(L)=\{K\in\mathbb{N}:\ K\equiv 2\pmod 4,\ K\ge 1\}=\{2,6,10,\dots\},
\quad\text{and} \quad \min \mathcal K_{(A,B)}(L) =2.
\]
\end{example}

\section{Conclusion and Remarks}
\label{sec:4}
The classical theory of good integers has been extended and completed by treating the non-coprime setting in a concise and  self-contained way. The characterization has been presented   in terms of   the classical coprime criterion   and $p$-adic valuations of $\ell$. As a consequence, the set of admissible exponents has been described as a single arithmetic progression, truncated below by the threshold, and an explicit formula for the minimal admissible  exponent has been provided. Special cases   have also been included.
Beyond the structural results, we present an implementable decision procedure: it decomposes $A,B,L$, determines the threshold $\gamma(L)$, analyzes the coprime core, and outputs both the decision and the full arithmetic progression of admissible exponents.

For further study, it would be interesting to investigate the asymptotic behavior and density of good integers in this general setting, as well as to explore potential applications.

 \section*{Acknowledgments}
This research was funded by the National Research Council of Thailand and Silpakorn University  under Research Grant  N42A650381.


\end{document}